\newtheorem{thm}{Theorem}[section]
\newtheorem{cor}[thm]{Corollary}
\newtheorem{lemma}[thm]{Lemma}
\newtheorem{assu}{Assumption}
\theoremstyle{definition}
\newtheorem{example}[thm]{Example}
\begin{document}

\title[]{A fast Berry-Esseen theorem under minimal density assumptions}



\author[S. Johnston]{Samuel G. G. Johnston}
\address{Department of Mathematics, Strand Building, King's College London, London, WC2R 2LS, United Kingdom.} \email{samuel.g.johnston@kcl.ac.uk}

\keywords{Berry-Esseen inequality, central limit theorem, characteristic function, Kolmogorov-Smirnov distance, Bernoulli random variable.}
\subjclass[2010]{Primary: 60F05. Secondary: 60E10, 60E15}

\maketitle

\begin{abstract}
Let $X_1,\ldots,X_N$ be i.i.d.\ random variables distributed like $X$. Suppose that the first $k \geq 3$ moments $\{ \mathbb{E}[X^j] : j = 1,\ldots,k\}$ of $X$ agree with that of the standard Gaussian distribution, that $\mathbb{E}[|X|^{k+1}] < \infty$, and that there is a subinterval of $\mathbb{R}$ of width $w$ over which the law of $X$ has a density of at least $h$. Then we show that 
\begin{align} \label{eq:bnew}
\sup_{s \in \mathbb{R}} \left| \mathbb{P} \left( \frac{X_1 + \ldots + X_N}{ \sqrt{N} } \leq s \right) -  \int_{-\infty}^s \frac{ e^{ - u^2/2} \mathrm{d} u }{ \sqrt{2 \pi }} \right| \leq 3 \left\{ \frac{\mathbb{E}[|X|^{k+1}]}{  N^{  \frac{k-1}{2}} } +  e^{ - c hw^3 N/\mathbb{E}[|X|^{k+1}] } \right\},
\end{align}
where $c > 0$ is universal. By setting $k=3$, we see that in particular all symmetric random variables with densities and finite fourth moment satisfy a Berry-Esseen inequality with a bound of the order $1/N$. 

Thereafter, we study the Berry-Esseen theorem as it pertains to perturbations of the Bernoulli law with a small density component, showing by means of a reverse inequality that the power $hw^3$ in the exponential term in \eqref{eq:bnew} is asymptotically sharp. 
\end{abstract}
\color{black}
\section{Introduction}
\subsection{The Berry Esseen theorem}
We say that a real-valued random variable $X$ is centered if $\mathbb{E}[X]=0$ and $\mathbb{E}[X^2]=1$. If $X$ is any random variable with mean $\mu$ and variance $\sigma$, $\frac{X-\mu}{\sigma}$ is centered.

Let $X_1,\ldots,X_N$ be independent random variables distributed like $X$, where $X$ is centered and $\mathbb{E}[|X|^3] < \infty$.
The central limit theorem states that the sum $N^{-1/2}\sum_{i=1}^N X_i$ converges in distribution to a standard Gaussian random variable.

The celebrated Berry-Esseen inequality \cite{berry, esseen} provides a quantitative version of the central limit theorem, stating that
\begin{align} \label{eq:be}
\sup_{s \in \mathbb{R}} \left| \mathbb{P} \left(  \frac{X_1 + \ldots + X_N}{ \sqrt{N} } \leq s \right) -  \int_{ - \infty}^s e^{ - u^2/2} \frac{ \mathrm{d} u }{ \sqrt{2 \pi}} \right| \leq \frac{ C \mathbb{E}[|X|^3]}{ \sqrt{N}},
\end{align}
where $C$ is a universal constant not depending on the distribution of $X$.

For general centered random variables with finite third moment, the $1/\sqrt{N}$ speed of convergence in \eqref{eq:be} is asymptotically sharp.
To see this, consider the case where $X_1,\ldots,X_N$ are independent Bernoulli random variables with probabilities $\mathbb{P}( X_i = -1) = \mathbb{P}( X_i = 1 ) = 1/2$. Then the distribution function of the renormalised sum $N^{-1/2}(X_1 +\ldots + X_N)$ has jumps of magnitude $1/\sqrt{N}$.

In this article we will discuss how under some fairly mild conditions, one may nonetheless improve on the $1/\sqrt{N}$ rate of convergence in \eqref{eq:be}. We will further be concerned with finding the minimal conditions under which one can improve this rate of convergence.

In the sequel it will lighten notation to write
\begin{align*}
\mathrm{d}_{\mathrm{KS}}(X,Y) := 
\sup_{s \in \mathbb{R}} \left| \mathbb{P} \left(  X \leq s \right) -   \mathbb{P} \left(  Y\leq s \right) \right| 
\end{align*}
for the Kolmogorov-Smirnov distance between random variables $X$ and $Y$. Furthermore, throughout $G$ will refer to a standard Gaussian random variable, so that
\begin{align*}
\mathbb{P}(G \leq s ) =\int_{-\infty}^s \frac{ e^{-u^2/2} \mathrm{d}u }{ \sqrt{2\pi}}.
\end{align*}

\subsection{A fast Berry-Esseen inequality.}
We will see that if further moments (i.e.\ third and possibly higher) of the distribution of $X$ match that of the standard Gaussian distribution, and that the law of $X$ is not singular with respect to Lebesgue measure, then we can improve on the $1/\sqrt{N}$ speed of convergence afforded by the standard Berry-Esseen inequality. 

Recall that every probability measure $\mu$ on the real line has a unique decomposition
\begin{align} \label{eq:decomp}
\mu = \mu_d + \mu'
\end{align}
so that $\mu_d$ is absolutely continuous with respect to Lebesgue measure, and 
$\mu'$ is singular with respect to Lebesgue measure. The Radon-Nikodym derivative of $\mu_d$ against Lebesgue measure is the nonnegative function $f:\mathbb{R} \to [0,\infty)$ satisfying
\begin{align} \label{eq:diffuse}
\mu_d ([a,b)) = \int_a^b f(u) \mathrm{d}u
\end{align}
for every subinterval $[a,b)$ of the real line.

We work under the following assumption. 
\begin{assu} \label{assu:main}
For some $h,w \leq 1$, the density of the part of $\mu$ absolutely continuous with respect to Lebesgue measure contains a rectangle of width $w$ and height $h$. In other words, in the setting of \eqref{eq:decomp} and \eqref{eq:diffuse} for some $a \in \mathbb{R}$ we have
\begin{align*}
f(s) \geq h \qquad \text{for all $s$ in $[a,a+w)$}.
\end{align*}
\end{assu}

We now state our first result.
\begin{thm} \label{thm:main}
Let $X_1,\ldots,X_N$ be independent random variables distributed like $X$, where $X$ has law $\mu$ satisfying Assumption \ref{assu:main}. Suppose that for some $k \geq 3$ we have 
\begin{align} \label{eq:mm}
\mathbb{E}[X^j] =  \int_{-\infty}^\infty u^j \frac{ e^{-u^2/2} \mathrm{d}u }{ \sqrt{2\pi}}, \qquad \text{for } j=1,\ldots,k.
\end{align}
Then provided $\mathbb{E}[|X|^{k+1}]<\infty$ we have
\begin{align} \label{eq:main}
\mathrm{d}_{\mathrm{KS}}\left(\frac{X_1+\ldots+X_N}{\sqrt{N}},  G\right) \leq 3 \left\{ \frac{\mathbb{E}[|X|^{k+1}]}{N^{\frac{k-1}{2}}}  +  e^{ - \frac{1}{160} hw^3 N/\mathbb{E}[|X|^{k+1}]} \right\}.
\end{align}
\end{thm}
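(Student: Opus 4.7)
The approach is Esseen's smoothing inequality combined with two complementary estimates on the characteristic function $\phi(s) := \mathbb{E}[e^{isX}]$ of $X$: a Taylor-based estimate for small $|s|$, and a density-based estimate for moderate $|s|$.

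For the Taylor estimate, since the first $k$ moments of $X$ match those of $G$, Taylor expansion with integral remainder gives $|\phi(s) - e^{-s^2/2}| \leq C_k \mathbb{E}[|X|^{k+1}] |s|^{k+1}$ for small $|s|$. Combined with a telescoping argument using a bound of the form $\max(|\phi(s)|, e^{-s^2/2})^{N-1} \leq e^{-c(N-1)s^2}$ on the same range, one obtains
$$|\phi(t/\sqrt{N})^N - e^{-t^2/2}| \leq C_k \, \mathbb{E}[|X|^{k+1}]\, |t|^{k+1} \, N^{-(k-1)/2} \, e^{-c t^2}.$$
After dividing by $|t|$ and integrating over $|t| \leq T_1 := \sqrt{N/\mathbb{E}[|X|^{k+1}]}$, this contributes an amount of order $\mathbb{E}[|X|^{k+1}]/N^{(k-1)/2}$. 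The range is compatible with Taylor validity since Lyapunov's inequality forces $\mathbb{E}[|X|^{k+1}] \geq 1$.

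For the density-based estimate, Assumption~\ref{assu:main} permits the decomposition $\mu = hw \cdot \mathrm{Unif}_{[a, a+w)} + (1-hw) \mu_1$ for some probability measure $\mu_1$. Taking characteristic functions gives
$$\phi(s) = hw \cdot e^{is(a+w/2)} \mathrm{sinc}(sw/2) + (1-hw) \phi_1(s),$$
and since $|\mathrm{sinc}(y)| \leq 1 - y^2/12$ for $|y| \leq 2$, the triangle inequality yields $|\phi(s)| \leq 1 - c_1 h w^3 s^2$ for $|sw| \leq 2$. Consequently $|\phi(t/\sqrt{N})|^N \leq e^{-c_1 h w^3 t^2}$ for $T_1 \leq |t| \leq T_2 := 2\sqrt{N}/w$, and $\int_{T_1}^\infty e^{-c_1 h w^3 t^2}/|t|\,dt$ is a Gaussian tail of order $e^{-c_1 h w^3 T_1^2} = e^{-c_1 h w^3 N/\mathbb{E}[|X|^{k+1}]}$, which is precisely the exponential term in \eqref{eq:main}. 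For $|t| \geq T_2$, the cruder estimate $|\phi(s)| \leq 1 - c_2 hw$ (valid for $|s| \geq 2/w$) gives $|\phi(t/\sqrt{N})|^N \leq e^{-c_2 hwN}$, a bound which is more than sufficient in this range.

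Putting these together, I would apply Esseen's smoothing inequality
$$\mathrm{d}_{\mathrm{KS}}(Z,G) \leq \frac{1}{\pi}\int_{-T}^T \frac{|\phi(t/\sqrt{N})^N - e^{-t^2/2}|}{|t|}\, \mathrm{d} t + \frac{C_0}{T}$$
with $T := \exp(c_1 h w^3 N/\mathbb{E}[|X|^{k+1}])$ chosen so that $C_0/T$ also matches the target exponential, and I would split the integral into the three ranges above. The main obstacles I anticipate are (i) proving the quadratic density bound $|\phi(s)| \leq 1 - chw^3 s^2$ with a sufficiently clean constant --- this reduces to the $\mathrm{sinc}$ Taylor estimate and is the true source of the factor $w^3$; (ii) ensuring the cutoff $T_1 \leq T_2$ (which uses $w \leq 1 \leq \sqrt{\mathbb{E}[|X|^{k+1}]}$), so that the three regions stitch together; and (iii) tracking constants tightly enough to recover the specific value $1/160$ in \eqref{eq:main}, which is likely the most calculation-intensive aspect of the proof.
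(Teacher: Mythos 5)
Your proposal is correct and follows essentially the same route as the paper: Esseen smoothing, a moment-matching bound $|\varphi(t/\sqrt{N})^N - e^{-t^2/2}| \lesssim N^{-(k-1)/2}\,|t|^{k+1}e^{-ct^2}$ for $|t| \lesssim \sqrt{N}$ (the paper's Theorem~\ref{thm:creat}), and the $\mathrm{sinc}$-based density estimate $|\varphi(s)| \leq 1 - c\,hw^3 s^2$ (the paper's Lemma~\ref{lem:charcont}) to control the tail, with the smoothing cutoff $L$ chosen exponentially large so that $4/L$ also matches the exponential term. The only variation is a technical one: you lift the single-variable Taylor estimate to the $N$-fold product via the telescoping inequality $|a^N - b^N| \leq N\max(|a|,|b|)^{N-1}|a-b|$ together with $\max(|\varphi(s)|, e^{-s^2/2}) \leq e^{-cs^2}$, while the paper bounds $|\mathrm{Log}\,\varphi(t) + t^2/2|$ and exponentiates (Lemmas~\ref{lem:local1}--\ref{lem:2}, Corollary~\ref{cor:arrau}); the two are interchangeable, and the range check you wave at via Lyapunov (that $T_1 = \sqrt{N/\mathbb{E}[|X|^{k+1}]}$ lies in the local-estimate region) does hold, but it relies on $(k+1)/2 \geq (k+1)/(k-1)$ for $k \geq 3$ rather than merely on $\mathbb{E}[|X|^{k+1}] \geq 1$, which is precisely the constant-tracking the paper performs in defining $c_0$ and deducing $\tilde c(k,m_{k+1}) \geq \tfrac{1}{160} m_{k+1}^{-(k+1)}$.
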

The exact statement of Theorem \ref{thm:main} is new, though as we discuss in the sequel, a result of this form would come as no surprise to specialists in Berry-Esseen theory. We will discuss in the sequel how, after appealing to Lemma \ref{lem:charcont} of the present article, it is possible to derive a statement similar to \eqref{eq:main} using Osipov's theorem \cite[Theorem 5.18]{petrov}.

Theorem \ref{thm:main} is not the sharpest possible result we give. In the sequel we will give a more refined (but more complicated) inequality in place of \eqref{eq:main} with a leading constant that decreases as $k$ increases, and an improved constant in the exponential term. See Theorem \ref{thm:main2} for our sharpest statement.

At least in the case $k = 3$ of \eqref{eq:mm}, the matching of the first three moments of a random variable $X$ with the standard Gaussian distribution is a natural condition: it holds for all centered symmetric random variables. (A random variable is symmetric if $-X$ has the same law as $X$.) Highlighting this case in particular, we have the following corollary:

\begin{cor} \label{cor:symmetric}
Let $X_1,\ldots,X_N$ be independent symmetric unit-variance random variables distributed like $X$, where the law of $X$ satisfies Assumption \ref{assu:main}. Suppose that $\mathbb{E}[X^4]<\infty$. Then we have 
\begin{align} \label{eq:main3}
\mathrm{d}_{\mathrm{KS}}\left(\frac{X_1+\ldots+X_N}{ \sqrt{N}},  G\right) &\leq 3 \left\{ \frac{ \mathbb{E}[X^4]  }{N} + e^{ - \frac{1}{160} hw^3 N / \mathbb{E}[X^4]} \right\}.
\end{align}
\end{cor}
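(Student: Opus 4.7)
The plan is to deduce Corollary \ref{cor:symmetric} directly from Theorem \ref{thm:main} by taking $k=3$, so the task reduces to verifying the moment hypotheses \eqref{eq:mm} under the additional assumption of symmetry.

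First I would check that the first three moments of $X$ match those of $G$. Since $X$ is centered and has unit variance, $\mathbb{E}[X] = 0 = \mathbb{E}[G]$ and $\mathbb{E}[X^2] = 1 = \mathbb{E}[G^2]$. The crucial point is the third moment: because $X$ is symmetric, meaning $-X \stackrel{d}{=} X$, all odd moments of $X$ vanish whenever they are defined, and in particular $\mathbb{E}[X^3] = 0 = \mathbb{E}[G^3]$. Hence the hypothesis \eqref{eq:mm} of Theorem \ref{thm:main} holds with $k=3$. (One should note that $\mathbb{E}[|X|^3] < \infty$, which one needs to make sense of $\mathbb{E}[X^3]=0$ in the first place, follows from the assumption $\mathbb{E}[X^4] < \infty$ via Jensen's inequality or Lyapunov's inequality.)

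Next I would observe that since $k+1 = 4$ is even, $|X|^{k+1} = X^4$ and hence $\mathbb{E}[|X|^{k+1}] = \mathbb{E}[X^4] < \infty$ by hypothesis. Assumption \ref{assu:main} is also directly given. Substituting $k=3$ into \eqref{eq:main} gives $N^{(k-1)/2} = N$ in the denominator of the first term and $\mathbb{E}[X^4]$ in place of $\mathbb{E}[|X|^{k+1}]$ in both terms, producing exactly the bound \eqref{eq:main3}.

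There is no real obstacle here: the entire content of the corollary is the elementary observation that symmetry promotes the matching of two moments (mean and variance) to the matching of three (mean, variance, and third moment) at no extra cost, enabling the $k=3$ case of Theorem \ref{thm:main} to fire.
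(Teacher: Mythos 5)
Your proposal is correct and matches the paper's own proof: both reduce Corollary \ref{cor:symmetric} to Theorem \ref{thm:main} with $k=3$ by noting that symmetry forces $\mathbb{E}[X]=\mathbb{E}[X^3]=0$ while unit variance gives $\mathbb{E}[X^2]=1$, so the moment-matching hypothesis \eqref{eq:mm} holds, and $\mathbb{E}[|X|^4]=\mathbb{E}[X^4]<\infty$. The paper states this more tersely, but the content is identical.
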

\begin{proof}
If $-X$ has the same law as $X$, then $\mathbb{E}[X]=\mathbb{E}[X^3]=0$. Thus since $X$ has unit variance, \eqref{eq:mm} holds with $k=3$. Now use Theorem \ref{thm:main}.
\end{proof}


\subsection{The sharpness of Theorem \ref{thm:main} and a reverse inequality}
With Theorem \ref{thm:main} at hand, the second part of the present article is concerned with establishing minimal conditions involving a density function under which we can improve the Berry-Esseen inequality from a bound of order $1/\sqrt{N}$ to one of a higher power in $1/\sqrt{N}$. To provide some motivation, we will center the discussion around a perturbation of the Bernoulli density, recalling from the 
from the introduction that the Bernoulli random variable offers a canonical example of the asymptotic sharpness of the $1/\sqrt{N}$ speed of convergence in the standard Berry-Esseen inequality. In short, in this section we study the sharpness of Theorem \ref{thm:main} by adding small density component to the Bernoulli law.

Namely, for $hw \leq 1$, suppose we take a mixture composed of $1-hw$ parts of a rescaled Bernoulli random variable and $hw$ parts of a uniform random variable of width $w$ centered at the origin, where the Bernoulli component is scaled so that the overall variance is $1$. In other words, define the symmetric probability law $\mu_{h,w}$ on $\mathbb{R}$ by 
\begin{align} \label{eq:special}
\mu_{h,w}( \mathrm{d}s ) = (1 - hw) \left( \frac{\delta_x(\mathrm{d}s ) + \delta_{-x}(\mathrm{d}s)}{2} \right) + h \mathrm{1}_{[-w/2,w/2]}(s) \mathrm{d}s,
\end{align}
where $x = x_{h,w}$ is chosen so that $\int_{-\infty}^\infty s^2 \mu_{h,w}(\mathrm{d}s) = 1$. That is, $x = (1 - hw^3/12)^{1/2}(1-hw)^{-1/2}$. 

The probability law $\mu_{h,w}(\mathrm{d}s)$ is clearly constructed so that it satisfies Assumption 1 (though in a sense, minimally so). Moreover, a random variable distributed according to $\mu_{h,w}$ satisfies the moment matching condition \eqref{eq:mm} with $k=3$. In particular, by Corollary \ref{cor:symmetric}, for fixed $h,w$, i.i.d.\ sums of random variables distributed like $\mu_{h,w}$ satisfy a Berry-Esseen theorem of rate $O_{h,w}(1/N)$. 

We now study the small-$(h,w)$ asymptotics of the Berry-Esseen theorem associated with the law $\mu_{h,w}$, which may be used to test the sharpness of Theorem \ref{thm:main}. Namely, first we provide an example which shows that $h,w$ can be taken as small as $N^{-\frac{1}{4}+o(1)}$ while still maintaining the $O(1/N)$ rate of convergence in the Berry-Esseen theorem. Then we again use this law to show that in some sense, the dependence of Theorem \ref{thm:main} on the parameters $h,w$ is asymptotically sharp.

In the former direction, we have the following toy example:

\begin{example} \label{ex:toy}
Write 
$\nu_N$ for the probability law $\mu_{h,w}$ associated with \begin{align*}
h = w = \delta_N := 4 ( \log N /N)^{1/4}.
\end{align*}
Then for $N \geq 100000$, if $X_1,\ldots,X_N$ are independent and identically distributed with law $\nu_N$, we have
\begin{align} \label{eq:main4}
\mathrm{d}_{\mathrm{KS}}\left(\frac{X_1+\ldots+X_N}{\sqrt{N}},  G\right) &\leq 8/N.
\end{align}
\end{example}
Example \ref{ex:toy} says that one need only add a tiny square of width and height of the order $N^{-1/4+o(1)}$ to the density of a Bernoulli random variable to improve its Berry-Esseen inequality from rate $1/\sqrt{N}$ to rate $1/N$. 

The equation \eqref{eq:main4} follows fairly quickly from Corollary \ref{cor:symmetric}, the key idea being that $\nu_N$ satisfies Assumption \ref{assu:main} with $h=w=\delta_N$, and that with this choice of $\delta_N$, the exponential term $ e^{ - \frac{1}{160} hw^3 N / \mathbb{E}[X^4]}$ decays at least as fast as $1/N$. We fill out all of the details of Example \ref{ex:toy} in Section \ref{sec:mainproof}.

\vspace{5mm}
Conversely, it turns out that the probability law $\mu_{h,w}$ can be used to establish the asymptotic sharpness of Theorem \ref{thm:main} in its dependency on $h$ and $w$ in Assumption 1.  In this direction, in Section \ref{sec:reverse} we show that if $X_1,\ldots,X_N$ are independent and identically distributed with law $\mu_{h,w}$, then there is a constant $c> 0$ such that whenever $hw^3N \leq c$ we have
\begin{align} \label{eq:counter}
\mathrm{d}_{\mathrm{KS}}\left( \frac{X_1 + \ldots + X_N}{ \sqrt{N} }, G \right) \geq \frac{c}{\sqrt{N}}.
\end{align}
Compare this with Corollary \ref{cor:symmetric}, which states that for fixed $h,w$, i.i.d.\ random variables with law $\mu_{h,w}$ satisfy a Berry-Esseen bound of rate $O_{h,w}(1/N)$. Roughly speaking, \eqref{eq:counter} then says that in order for the Berry-Esseen bound to improve in rate from $1/\sqrt{N}$ to $1/N$, we need $N$ sufficiently large so that $hw^3N$ exceeds $O(1)$. 

This example gives us some indication that the $hw^3$ dependency in the exponential term in Theorem \ref{thm:main} is nearly optimal, and we ultimately use the inequality in \eqref{eq:counter} to quantify this sharpness through the following result.

\begin{thm} \label{thm:reverse}
Let $\rho,\rho'$ be non-negative reals, at least one of which is strictly positive. Let $C,c > 0$ be constants. Then there exists a natural number $N$, and a probability law $\mu$ satisfying Assumption \ref{assu:main} and with finite $(k+1)^{\text{th}}$ absolute moment $\mathbb{E}[|X|^{k+1}]$ such that
\begin{align} \label{eq:reverse}
\mathrm{d}_{\mathrm{KS}}\left(\frac{X_1+\ldots+X_N}{\sqrt{N}},  G\right) > C \left\{ \frac{\mathbb{E}[|X|^{k+1}]}{N^{\frac{k-1}{2}}}  +  e^{ - c h^{1-\rho}w^{3-\rho'} N/\mathbb{E}[|X|^{k+1}]} \right\}
\end{align}
whenever $X_1,\ldots,X_N$ are i.i.d.\ with law $\mu$.
\end{thm}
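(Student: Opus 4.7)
The plan is to combine the reverse inequality \eqref{eq:counter} announced in the preceding discussion with a one-parameter scaling of the family $\mu_{h,w}$ from \eqref{eq:special}. Let $c_0 > 0$ denote the absolute constant appearing in \eqref{eq:counter}. Given $\rho, \rho' \geq 0$ with $\rho + \rho' > 0$ and given $C, c > 0$, I would take $\mu := \mu_{h,w}$ with
\[
h \,=\, w \,=\, (c_0/N)^{1/4},
\]
so that $hw^3 N = c_0$, choosing the natural number $N$ large enough (the explicit threshold is fixed in the third paragraph). The law $\mu_{h,w}$ is symmetric with unit variance, is supported in $[-1-o(1), 1+o(1)]$ so that every absolute moment is bounded uniformly in $N$, and it satisfies Assumption \ref{assu:main} with these $h, w$.

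With this choice, \eqref{eq:counter} immediately supplies the lower bound
\[
\mathrm{d}_{\mathrm{KS}}\!\left(\tfrac{X_1+\cdots+X_N}{\sqrt{N}},\, G\right) \,\geq\, \frac{c_0}{\sqrt{N}}.
\]
It then suffices to verify that the right-hand side of \eqref{eq:reverse} is strictly smaller than $c_0/\sqrt{N}$ once $N$ is large. Writing $M_k := \sup_{N} \mathbb{E}[|X|^{k+1}] < \infty$, the polynomial term is at most $C M_k / N^{(k-1)/2}$, which is already $O(1/N)$ since $k \geq 3$. For the exponential term, the scaling yields
\[
h^{1-\rho} w^{3-\rho'} N \,=\, c_0^{(4-\rho-\rho')/4}\, N^{(\rho+\rho')/4},
\]
which tends to $+\infty$ precisely because $\rho + \rho' > 0$, so the exponential factor decays faster than any fixed power of $1/N$.

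Both terms on the right-hand side of \eqref{eq:reverse} are therefore $o(1/\sqrt{N})$, and for $N$ sufficiently large (depending on $C$, $c$, $\rho$, $\rho'$, $k$, $c_0$ and $M_k$) the bound \eqref{eq:reverse} is dominated by the lower bound $c_0/\sqrt{N}$ of the previous paragraph, giving the required strict inequality. The argument thus reduces to a bookkeeping exercise once \eqref{eq:counter} is in hand.

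Consequently the main substantive obstacle does not lie in the present deduction but in establishing \eqref{eq:counter} itself, which is carried out separately in Section \ref{sec:reverse}. Heuristically, \eqref{eq:counter} says that while $hw^3 N$ remains bounded the density component of $\mu_{h,w}$ has not yet entered enough of the summands to wash out the atomic behaviour, so a residual Bernoulli-like jump of size $\Omega(1/\sqrt{N})$ must persist in the distribution function of $N^{-1/2}\sum_i X_i$. I expect this to be proved by a direct Fourier analysis of the characteristic function of $\mu_{h,w}$, isolating the contribution of the point masses from that of the small uniform perturbation; that characteristic-function estimate, rather than the scaling argument sketched above, is where the real difficulty of Theorem \ref{thm:reverse} sits.
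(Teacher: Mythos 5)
Your deduction of Theorem \ref{thm:reverse} from the lower bound \eqref{eq:counter} is correct and essentially identical to the paper's own argument: the paper also takes $h=w$ proportional to $N^{-1/4}$ (specifically $h=w=\tfrac13 N^{-1/4}$, so that $hw^3N = 1/81$), invokes Lemma \ref{lem:counter} for the $\Omega(1/\sqrt N)$ lower bound, bounds $\mathbb{E}[|X|^4]$ uniformly, and observes that the exponential term decays like $e^{-c' N^{(\rho+\rho')/4}}$, which is $o(1/\sqrt N)$ once $\rho+\rho'>0$. Your closing speculation is off in one respect worth noting: the paper's proof of the key inequality \eqref{eq:counter} (Lemma \ref{lem:counter}) is not Fourier-analytic but a direct probabilistic computation --- it conditions on how many of the $N$ summands come from the uniform component versus the Bernoulli component, and compares the resulting increment of the distribution function over an interval of width $O(1/\sqrt N)$ near the origin (shifted by the parity of $N$) against the corresponding increment of $\Phi$, using Stirling for the central binomial mass and Chebyshev to control the spread of the uniform part.
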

Thus there is no improving on the power $hw^3$ in \eqref{eq:main}.

We now explain briefly the source of the $hw^3$ term. The headline is that this follows from multiplying $hw$, which is the total amount of mass of the rectangle, with $w^2$, which is the order of the variance of a random variable uniform on an interval of width $w$. Indeed, take a sum of $N$ copies of a random variable distributed like $\mu_{h,w}$. Then, regarding $\mu_{h,w}$ as a mixture of Bernoullis and uniforms, we would expect of the order $Nhw$ of these random variables to be uniforms. Now an (unscaled) sum of $N$ Bernoulli variables has gaps of the order $1$ in its support. In order for the uniforms of variance $O(w^2)$ to cover these gaps, we need of the order $O(1/w^2)$ of them. Thus we need $N$ large enough so that $Nhw \geq O(1/w^2)$ before the gaps in the support are covered. In other words, we need $N$ sufficiently large so that $Nhw^3 \geq O(1)$ before the contribution from the uniform part of the density starts to significantly smoothen the jumps in the density of an i.i.d.\ sum of Bernoullis. 

\color{black}

\subsection{Further discussion}
Let us take a moment to discuss the main ideas of the proof of Theorem \ref{thm:main}. While in the main body of the paper we keep track of constants explicitly, to expedite the discussion here, throughout this section we will write $C,c>0$ for universal constants that may change from line to line. 

The Fourier-analytic approach to proving the classical Berry-Esseen inequality pivots on the Berry smoothing inequality, which states that if $G$ is a standard Gaussian random variable and $Y$ is a random variable with characteristic function $\psi(t) := \mathbb{E}[e^{itY}]$, then for any $L > 0$ we have 
\begin{align} \label{eq:berry smoothing 1}
\mathrm{d}_{\mathrm{KS}}(Y, G) \leq  \frac{4}{L} +  \frac{1}{\pi} \int_{ - L}^L \frac{| \psi(t) - e^{ -t^2/2} |}{|t|} \mathrm{d} t.
\end{align}
The inequality \eqref{eq:berry smoothing 1} is well known, though it appears to go by various names: in \cite{FMN} it is called Feller's lemma, and in \cite{petrov} it is Esseen's theorem. In any case, one may consult \cite[Section XVI.3]{feller} for a proof of \eqref{eq:berry smoothing 1}.  In fact, we have specialised the inequality slightly for the task at hand. More broadly, the Berry smoothing inequality can be used to control using characteristic functions the Kolmogorov-Smirnov distance between any two random variables $Y$ and $Z$, only one of which need have a density. The constants depend on the supremum of this density. \color{black}

The inequality \eqref{eq:berry smoothing 1} says that in order to control the Kolmogorov-Smirnov distance between a random variable $Y$ and a standard Gaussian random variable $G$, it is sufficient to control the difference between their characteristic functions.

With a view to applying this inequality to sums of i.i.d.\ random variables, we note that 
\begin{align*}
\varphi(t) := \mathbb{E}[e^{itX}] \implies \varphi(t/\sqrt{N})^N = \mathbb{E}[e^{it N^{-1/2}(X_1+\ldots+X_N)}],
\end{align*}
so that by \eqref{eq:berry smoothing 1} we have 
\begin{align} \label{eq:berry smoothing 2}
\mathrm{d}_{\mathrm{KS}}\left( \frac{X_1+\ldots+X_N}{\sqrt{N}}, G \right)  \leq  \frac{4}{L} +  \frac{1}{\pi} \int_{ - L}^L \frac{| \varphi(t/\sqrt{N})^N - e^{ -t^2/2} |}{|t|} \mathrm{d} t.
\end{align}
Thus to obtain a good upper bound on $\mathrm{d}_{\mathrm{KS}}\left(N^{-1/2}(X_1+\ldots+X_N),  G \right)$, one requires control of the difference $| \varphi(t/\sqrt{N})^N - e^{ -t^2/2} |$ over $t$ in a large region $[-L,L]$. 

In Fourier-analytic proofs of the classical Berry-Esseen inequality, a manipulation involving characteristic functions, exploiting the centeredness of $X$ and the third moment condition $\mathbb{E}[|X|^3] < \infty$, tells us that for universal constants $c,C>0$ we have 
\begin{align} \label{eq:ineq}
| \varphi(t/\sqrt{N})^N - e^{ -t^2/2} | \leq C\mathbb{E}[|X|^3]  \frac{ |t|^3}{ \sqrt{N}} e^{ - t^2/4} ~~ \qquad \text{for all $|t| \leq c \frac{ \sqrt{N}}{\mathbb{E}[|X|^3] } $}.
\end{align} 
Somewhat parsimoniously, the bound in \eqref{eq:ineq} both has order $1/\sqrt{N}$ and provides coverage for $t$ in a region of the order $\sqrt{N}$. Indeed, by plugging $L_N =c  \frac{ \sqrt{N}}{\mathbb{E}[|X|^3] } $ into \eqref{eq:berry smoothing 2} we obtain 
\begin{align*}
\mathrm{d}_{\mathrm{KS}}\left( \frac{X_1+\ldots+X_N}{\sqrt{N}},G \right)  &\leq \frac{ 4  \mathbb{E}[|X|^3] }{ c \sqrt{N}} + \frac{C}{\pi} \frac{  \mathbb{E}[|X|^3]}{ \sqrt{N}}   \int_{ - L_N}^{L_N} t^2 e^{ - t^2/4}  \mathrm{d} t \\
& \leq \frac{C \mathbb{E}[|X|^3] }{\sqrt{N}},
\end{align*}
where to obtain the final inequality above, we simply used the fact that $\int_{-\infty}^\infty t^2e^{-t^2/4}\mathrm{d} t \leq C$. In short, by using \eqref{eq:ineq} in \eqref{eq:berry smoothing 2}, one obtains the Berry-Esseen inequality \eqref{eq:be}. 

In our proof of the more general Theorem \ref{thm:main}, we too appeal to the Berry smoothing inequality, though our proof is otherwise self-contained. However, the argument we use to prove Theorem \ref{thm:main} has additional facets to it. In the setting where we have higher matching of moments of $X$ with the standard Gaussian distribution (i.e.\ \eqref{eq:mm} holds with $k \geq 3$), we are able to improve on the bound in \eqref{eq:ineq}. Indeed, in Theorem \ref{thm:creat} we show that if $\mathbb{E}[X^j] = \mathbb{E}[G^j]$ for all $j \leq k$, and $\mathbb{E}[|X|^{k+1}]$ is finite, then 
\begin{align} \label{eq:ineq2}
| \varphi(t/\sqrt{N})^N - e^{ -t^2/2} | \leq \frac{4}{(k+1)!} \mathbb{E}[|X|^{k+1}] \frac{ |t|^{k+1}}{N^{\frac{k-1}{2}  } } e^{ - t^2/4} ~~ \qquad \text{for all $|t| \leq c_{k,X} \sqrt{N}$},
\end{align}
where $c_{k,X}$ is a constant depending on $k$ and on $\mathbb{E}[|X|^{k+1}]$. 

The equation \eqref{eq:ineq} is then the special case $k = 2$ of \eqref{eq:ineq2}. 

While, for $k \geq 3$ the bound in \eqref{eq:ineq2} is asymptotically sharper than \eqref{eq:ineq} in its range (with the bound of the order $N^{- \frac{k-1}{2}}$ improving on $N^{-1/2}$), it shares in common with the $k=2$ case in \eqref{eq:ineq} that the bound is active only for $|t|$ of the order up to $O(\sqrt{N})$. As such, with \eqref{eq:ineq2} alone as a tool to plug into \eqref{eq:berry smoothing 2}, the best choice of $L$ we can take is still only $O(\sqrt{N})$, and thus the best bound we can hope for is of the order $1/\sqrt{N}$.

In fact, for every $k$, one may construct a lattice-valued random variable whose first $k$ moments agree with that of the standard Gaussian distribution. (The Bernoulli random variable provides an example for $k=3$; for $k \geq 4$, the reader may want to consult literature such as \cite{A1} or \cite{A2}.) Distribution functions for rescaled sums of i.i.d.\ copies of such a random variable have jumps of the order $1/\sqrt{N}$, and hence the Berry-Esseen inequality for such random variables is sharp with this order. In summary, even with an arbitrary degree of moment matching, without additional smoothness assumptions, we may not improve on the $O(1/\sqrt{N})$ speed of convergence of the standard Berry-Esseen inequality.

All is not lost however. Under Assumption \ref{assu:main}, we gain additional control over the integrand $| \varphi(t/\sqrt{N})^N - e^{ -t^2/2} |$ over a larger region in $t$ than that supplied by \eqref{eq:ineq2}. Outside of the $O(\sqrt{N})$-sized region where \eqref{eq:ineq2} provides coverage, we simply use the triangle inequality $| \varphi(t/\sqrt{N})^N - e^{ -t^2/2} | \leq | \varphi(t/\sqrt{N})^N| + |e^{ -t^2/2} |$, and control each term on the right-hand-side individually. On the one hand we have the simple bound,
\begin{align*}
\int_{|t| \geq c\sqrt{N}}  |e^{ -t^2/2} | \mathrm{d}t/t \leq Ce^{ - c' N}.
\end{align*}
More importantly, a simple Fourier-analytic argument tells us that if $\varphi(t) := \int_{-\infty}^\infty e^{its} \mu(\mathrm{d}s)$ is the characteristic function of a probability law $\mu$ satisfying Assumption $1$, then there is a universal constant $c$ such that 
\begin{align} \label{eq:away}
|\varphi(t)| \leq e^{ - c hw^3 t_0^2} \qquad \text{for all $|t| \geq t_0$}.
\end{align}
In particular, it follows that 
\begin{align*}
|\varphi(t/\sqrt{N})|^N \leq e^{ - chw^3 c_0^2 N} \qquad \text{for all $|t| \geq c_0 \sqrt{N}$}.
\end{align*}
After carefully overlaying the bounds, and selecting an optimal choice of $L$, we can control the integrand in \eqref{eq:berry smoothing 2} with a bound of the order $N^{-\frac{k-1}{2}}$, over a region of order at least $N^{\frac{k-1}{2}}$, thereby proving Theorem \ref{thm:main}.

\color{black}

\subsection{Related work}

The Berry-Esseen inequality was proved independently by Berry \cite{berry} and Esseen \cite{esseen} in the early 1940s. In fact the Berry-Esseen inequality \eqref{eq:be} is valid for independent but nonidentically distributed random variables with $\mathbb{E}[|X_j|^3] <\infty$. The key ideas of our proof of Theorem \ref{thm:main} all carry through to the non-identically distributed case (say, if we assume each random variable has a law satisfying Assumption \ref{assu:main}), but for the sake of fluency we have concentrated on the identically distributed case. 

As mentioned above, the exact statement we give in Theorem \ref{thm:main} is new, but there are similar results in the literature, usually presenting the bounds with terms involving the characteristic function of the constituent random variables. Most notably, we have Osipov's theorem \cite[Theorem 5.18]{petrov} which in the case that $X_1,\ldots,X_N$ are i.i.d.\ and satisfy \eqref{eq:mm} for some $k \geq 3$, states that 
\begin{align} \label{eq:Osipov}
&\sup_{s \in \mathbb{R}} \left| \mathbb{P} \left(  \frac{X_1 + \ldots + X_N}{ \sqrt{N} } \leq s \right) -  \int_{ - \infty}^s e^{ - u^2/2} \frac{ \mathrm{d} u }{ \sqrt{2 \pi}} \right| \nonumber \\
&\leq c(k) \left\{ \frac{\mathbb{E}[|X|^{k+1}]}{  N^{  \frac{k-1}{2}} } + N^{k(k+1)/2} \left( \sup_{|t|>\delta} |\varphi(t)| + 1/2N \right)^N \right\},
\end{align}
where $\delta := c/\mathbb{E}[|X|^3]$ for a universal $c>0$. (We have simplified the statement slightly for brevity - Osipov's theorem more generally involves an expansion in terms of the moments of $X_1$, which need not match the normal.) On this front we also mention work by Yaroslavtseva, e.g.\ \cite{yar}. 

Suppose Assumption \ref{assu:main} holds. Then after an appeal to Lemma \ref{lem:charcont} of the present article, and some further calculation, this reduces to 
\begin{align} \label{eq:Osipov2}
\sup_{s \in \mathbb{R}} \left| \mathbb{P} \left(  \frac{X_1 + \ldots + X_N}{ \sqrt{N} } \leq s \right) -  \int_{ - \infty}^s e^{ - u^2/2} \frac{ \mathrm{d} u }{ \sqrt{2 \pi}} \right| \leq c(k) \left\{ \frac{\mathbb{E}[|X|^{k+1}]}{  N^{  \frac{k-1}{2}} } + N^{k(k+1)/2} e^{ - chw^3N/\mathbb{E}[|X|^3]^2} \right\}.
\end{align}
Aside from the additional $N^{k(k+1)/2}$ factor (which, for large $N$, is asymptotically negligable against the exponential term) this recovers Theorem \ref{thm:main}. While Theorem \ref{thm:main} follows as a consequence of \eqref{eq:Osipov} and the fairly straightforward Lemma \ref{lem:charcont} of the present article, we nonetheless choose to prove Theorem \ref{thm:main} in full assuming only \eqref{eq:berry smoothing 1}, as the proof (inclusive of Lemma \ref{lem:charcont}) occupies only five pages. 

There are several other related results, often providing more detailed information than Theorem \ref{thm:main} but under stronger conditions. 
Like in the present article, Boutsikas \cite{boutsikas} studies random variables with densities and whose first $k$ moments match the standard Gaussian. 
Under a fairly detailed condition on the total variation distance between the law of $X_1$ and a standard Gaussian distribution, Boutsikas shows that the asymptotic total variation distance between the recentered sum $S_N = N^{-1/2}(X_1+\ldots+X_N)$ and a standard Gaussian random variable decays like $N^{-\frac{k-1}{2}}$. 
In a recent preprint, Derumigny, Girard and Guyonvarch \cite{DGG} prove a result similar to Theorem \ref{thm:main}, but under a complicated Fourier-analytic	 condition, which is satisfied in particular
when the density function of $S_N$ is $(N-1)$-times differentiable. The implied constants in the theorem then depend on this $(N-1)^{\text{th}}$ derivative. There is also a body of work involving optimising the bounds in the absense of a third moment condition but instead a condition of the form $\mathbb{E}[|X|^{2+\delta}] < \infty$ for some $\delta \in (0,1)$. See e.g.\ Bobkov \cite{bobkov} and the references therein. We also mention Jirak \cite{jirak}, who has studied the speed of convergence for stationary sequences.

Another relevant work in a similar spirit is that of Klartag and Sodin \cite{KS}, who draw on connections with high-dimensional geometry in their study of a weighted version of the Berry-Esseen inequality. Namely, let $X_1,\ldots,X_N$ be independent random variables distributed like $X$, where $X$ has finite fourth moment. Let $\theta = (\theta_1,\ldots,\theta_N)$ be an element of the unit sphere $\mathbb{S}^{N-1}$ (i.e.\ $\sum_{j=1}^N\theta_j^2 =1$). Klartag and Sodin use an intricate Fourier-analytic argument to show that, roughly speaking, for most elements $\theta$ of the unit sphere we have
\begin{align*}
\mathrm{d}_{\mathrm{KS}}( \theta_1 X_1 + \ldots + \theta_N X_N, G) \leq C\mathbb{E}[X^4]/N
\end{align*} 
exceeding the $1/\sqrt{N}$ rate given by the standard Berry-Esseen inequality \eqref{eq:be}, which corresponds to the case $\theta_1 = \ldots = \theta_N = 1/\sqrt{N}$. The intuition for this result is related to Maxwell's principle, which states that the coordinates of a uniformly chosen element of the unit sphere are approximately Gaussian \cite{DF2, JP}. Thus, for a typical element $\theta$ of the unit sphere, $\theta_1X_1 + \ldots + \theta_n X_N$ is `more Gaussian' than the equally weighted sum $\frac{1}{\sqrt{N}}X_1 + \ldots + \frac{1}{\sqrt{N}}X_N$.

Finally, let us touch on the local limit theorem, which is the analogue of the Berry-Esseen inequality for the density function (as opposed to distribution function) of a rescaled sum of independent and identically distributed random variables. The local limit theorem (see e.g.\ Section XVI.2 of Feller \cite{feller}) states that if $X$ has a density with respect to Lebesgue measure, and that some positive power $|\varphi(t)|^\nu$ of the modulus of the characteristic function of $X$ is integrable, then the density $f_n(s)$ of $N^{-1/2}(X_1+\ldots+X_N)$ (with $X_i \sim^{\text{i.i.d.}} X$) satisfies
\begin{align} \label{eq:edgeworth}
f_n(s) = \frac{1}{\sqrt{2\pi}}e^{-s^2/2} \left\{ 1 + \frac{\mathbb{E}[X^3]}{6\sqrt{N} } (s^3-3s) \right\} + o(N^{-1/2}),
\end{align}
where the error term $o(N^{-1/2})$ is uniform in $s$. Of course, in the setting $k \geq 3$ in \eqref{eq:mm}, we would have $\mathbb{E}[X^3]=0$, so that the density of $f_n(s)$ agrees with the standard Gaussian density up to an error $o(N^{-1/2})$. This result complements Theorem \ref{thm:main} in the case $k =3$. We should mention that it is possible to obtain further terms in the expansion \eqref{eq:edgeworth} in terms of Hermite polynomials; the full expansion is known as the Edgeworth expansion. 
 
In recent decades, Berry-Esseen theory has remained an extremely active field of research \cite{BCG, GS, XGL, DF, bobkov, raic}. We highlight in particular Shetsova \cite{shev1,shev2,shev3,shev4} and her coauthors, who have worked on optimising the constant $C$ occuring in \eqref{eq:be} in both the identically and non-identically distributed case.

\subsection{Overview}
 The remainder of the paper is structured as follows. In Section \ref{sec:local} we work on local control over characteristic functions, culminating in a proof of Theorem \ref{thm:creat}, which is a more explicit version of the inequality outlined in \eqref{eq:ineq2}. In Section \ref{sec:global} we study global control over characteristic functions, showing that probability laws satisfying Assumption \ref{assu:main} satisfy an equation of the form \eqref{eq:away}.\color{black} In Section \ref{sec:mainproof}, we tie our work together, leading to a proof of Theorem \ref{thm:main}. In the final part, Section \ref{sec:reverse}, we prove Theorem \ref{thm:reverse} showing that the $hw^3$ power in Theorem \ref{thm:main} is optimal.

In our derivation of Theorem \ref{thm:main}, we make a mild effort to control the constants.

At several points through the article we will make use of the well known Stirling inequalities
\begin{align} \label{eq:stirling}
\sqrt{2 \pi k}(k/e)^k \leq k! \leq \frac{11}{10} \sqrt{2 \pi k}(k/e)^k,
\end{align}
which are valid for all $k \geq 1$. 
\section{Local control of characteristic functions with moment matching} \label{sec:local}
In this section, we work towards and prove Theorem \ref{thm:creat}, which is a bound of the form \eqref{eq:ineq2}. Throughout this section we will be working under a moment matching bound, but emphasise that at this stage we do not make any assumptions on the smoothness of the density of the random variables. 

We use the notation $\mathrm{Log}:\mathbb{C} \to \mathbb{R} \times i(-\pi,\pi]$ for the principal value of the complex logarithm.

\color{black}
We begin with the following lemma, which amounts to the case $N=1$ of Theorem \ref{thm:creat}.
\begin{lemma} \label{lem:local1}
Let $\varphi(t) := \mathbb{E}[e^{itX}]$ be the characteristic function of a random variable $X$ satisfying \eqref{eq:mm} for some $k\geq 2$. Then
\begin{align*}
| \varphi(t) - e^{ -t^2/2} | \leq  3 \frac{ |t|^{k+1}}{(k+1)!} \mathbb{E}[|X|^{k+1} ] \qquad \text{for $t \in \mathbb{R}$}.
\end{align*}
\end{lemma}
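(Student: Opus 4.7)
The plan is to compare Taylor expansions of $\varphi(t)$ and $e^{-t^2/2}$ around $t=0$, using the moment-matching hypothesis to annihilate the polynomial parts.

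First I would record the standard estimate for the Taylor remainder of the complex exponential. Writing $R_k(y) := e^{iy} - \sum_{j=0}^k (iy)^j/j!$ and using the integral representation
$$R_k(y) = \frac{(iy)^{k+1}}{k!}\int_0^1 (1-s)^k e^{isy}\,\mathrm{d}s,$$
together with $|e^{isy}|=1$, gives the uniform bound $|R_k(y)| \leq |y|^{k+1}/(k+1)!$ for every $y\in\mathbb{R}$. Applying this with $y = tX$ and taking expectations, then repeating the step with $y = tG$, yields
$$\varphi(t) = \sum_{j=0}^k \frac{(it)^j}{j!}\mathbb{E}[X^j] + \mathbb{E}[R_k(tX)], \qquad e^{-t^2/2} = \sum_{j=0}^k \frac{(it)^j}{j!}\mathbb{E}[G^j] + \mathbb{E}[R_k(tG)].$$

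By the moment-matching assumption \eqref{eq:mm} (with the convention $\mathbb{E}[X^0]=\mathbb{E}[G^0]=1$), the two polynomial parts cancel, and I obtain the identity
$$\varphi(t) - e^{-t^2/2} = \mathbb{E}[R_k(tX)] - \mathbb{E}[R_k(tG)].$$
The triangle inequality then produces the preliminary estimate
$$|\varphi(t) - e^{-t^2/2}| \leq \frac{|t|^{k+1}}{(k+1)!}\bigl(\mathbb{E}[|X|^{k+1}] + \mathbb{E}[|G|^{k+1}]\bigr).$$

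To reach the claimed constant $3$, I would use Lyapunov's inequality $\mathbb{E}[|X|^{k+1}]\geq (\mathbb{E}[X^2])^{(k+1)/2} = 1$ together with the higher matched moments (e.g. $\mathbb{E}[X^k]=(k-1)!!$ when $k$ is even, $\mathbb{E}[X^{k-1}]=(k-2)!!$ when $k$ is odd) to produce a quantitative lower bound on $\mathbb{E}[|X|^{k+1}]$ that dominates the known closed-form value of $\mathbb{E}[|G|^{k+1}]$ up to a factor of $2$. The main obstacle is controlling the ratio $\mathbb{E}[|G|^{k+1}]/\mathbb{E}[|X|^{k+1}]$ uniformly in $k$: the borderline case is small odd $k$ (notably $k=3$ with $X=\pm 1$, where $\mathbb{E}[G^4]=3$ and $\mathbb{E}[X^4]=1$), and there the raw bound above would read $4\mathbb{E}[|X|^{k+1}]/(k+1)!\cdot|t|^{k+1}$. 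To repair this, I would split into regimes: on the region $|t|^{k+1}\geq 2(k+1)!/(3\mathbb{E}[|X|^{k+1}])$ use the trivial estimate $|\varphi(t) - e^{-t^2/2}|\leq 2 \leq 3|t|^{k+1}\mathbb{E}[|X|^{k+1}]/(k+1)!$; on the complementary bounded range use a sharper form of the Gaussian remainder obtained from the explicit series $\mathbb{E}[R_k(tG)] = \sum_{m \geq \lceil(k+1)/2\rceil}(-t^2/2)^m/m!$, whose leading term is dominated via the Lagrange remainder for $u\mapsto e^{-u/2}$. Gluing the two regimes together yields the inequality with constant $3$ for all $t\in\mathbb{R}$.
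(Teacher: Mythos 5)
Your opening is the same as the paper's: Taylor-expand $e^{iy}$ to order $k$ with the remainder $R_k$, bound $|R_k(y)|\leq |y|^{k+1}/(k+1)!$, take expectations with $y=tX$ and $y=tG$, and use \eqref{eq:mm} to cancel the polynomial parts, arriving at the same intermediate estimate $|\varphi(t)-e^{-t^2/2}|\leq \frac{|t|^{k+1}}{(k+1)!}\bigl(\mathbb{E}[|X|^{k+1}]+\mathbb{E}[|G|^{k+1}]\bigr)$. From here the paper closes by claiming $\mathbb{E}[|G|^{k+1}]\leq 2\,\mathbb{E}[|X|^{k+1}]$, via a Stirling estimate $\mathbb{E}[|G|^{k+1}]\leq 2\,\mathbb{E}[|G|^{k}]^{(k+1)/k}$, the substitution $\mathbb{E}[|G|^{k}]=\mathbb{E}[|X|^{k}]$, and Jensen. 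Your instinct that the constant $3$ is delicate is well placed: the substitution $\mathbb{E}[|G|^{k}]=\mathbb{E}[|X|^{k}]$ follows from \eqref{eq:mm} only when $k$ is \emph{even} (for odd $k$ the hypothesis controls $\mathbb{E}[X^k]=0$, not $\mathbb{E}[|X|^k]$). Your own example pins this down: for $k=3$ and $X=\pm 1$ one has $\mathbb{E}[|G|^4]=3$ but $\mathbb{E}[|X|^4]=1$, so $\mathbb{E}[|G|^{k+1}]\leq 2\,\mathbb{E}[|X|^{k+1}]$ fails and the plain triangle inequality only yields the constant $4$ here.

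However, your proposed repair is not carried far enough to recover the constant $3$. The large-$|t|$ branch (trivial bound $2$) is fine. But on the complementary range the Lagrange remainder for $u\mapsto e^{-u/2}$ gives, at $k=3$, $0\leq \mathbb{E}[R_3(tG)]=e^{-t^2/2}-1+t^2/2\leq t^4/8$, which is \emph{exactly} the bound $|t|^4\mathbb{E}[|G|^4]/4!$ you already have -- so simply adding $|\mathbb{E}[R_3(tX)]|+\mathbb{E}[R_3(tG)]\leq \frac{t^4}{24}(\mathbb{E}[X^4]+3)$ still requires $\mathbb{E}[X^4]\geq 3/2$, which can fail. What actually makes the estimate work (your numerics give ratio $\approx 2/3$ for $X=\pm 1$) is a sign cancellation: for symmetric $X$ both $\mathbb{E}[R_3(tX)]$ and $\mathbb{E}[R_3(tG)]$ are real and nonnegative for $|t|$ in the relevant range, so their difference is smaller than either term, and one must argue along those lines (or track the two alternating tails explicitly) rather than via a one-sided bound on the Gaussian remainder alone. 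As written, the sketch leaves a genuine gap in the small-$|t|$ case, so the argument does not yet establish the lemma with constant $3$.
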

\begin{proof}
For $u \in \mathbb{R}$, we have the Taylor series expansion $e^{iu} = \sum_{j=0}^k (iu)^j/j! + \Omega(u)|u|^{k+1}/(k+1)!$, where for each $u$, $\Omega(u)$ is a complex number of modulus at most $1$. In particular, with $G$ standard Gaussian, using \eqref{eq:mm} we have 
\begin{align*}
\varphi(t) - e^{ -t^2/2} = \mathbb{E} [ e^{itX} - e^{itG} ] = \frac{ |t|^{k+1}}{(k+1)!} \mathbb{E}\left[ \Omega(tX)|X|^{k+1} - \Omega(tG) |G|^{k+1} \right]\qquad \text{for } t \in \mathbb{R}.
\end{align*}
Thus by the triangle inequality
\begin{align} \label{eq:crel}
| \varphi(t) - e^{ -t^2/2} | \leq  \frac{ |t|^{k+1}}{(k+1)!} ( \mathbb{E}[|X|^{k+1} + \mathbb{E}[|G|^{k+1}]) \qquad \text{for } t \in \mathbb{R}.
\end{align}
A brief calculation using the Stirling bounds \eqref{eq:stirling} and $\mathbb{E}[|G|^j] = 2^{j/2}\Gamma((j+1)/2)$ tells us that $\mathbb{E}[|G|^{k+1}] \leq 2 \mathbb{E}[ |G|^k ]^{\frac{k+1}{k}}= 2 \mathbb{E}[ |X|^k ]^{\frac{k+1}{k}}$, which, by Jensen's inequality, is less than $2 \mathbb{E}[ |X|^{k+1}]$. Thus $\mathbb{E}[|G|^{k+1}] \leq 2 \mathbb{E}[|X|^{k+1}]$. Using this fact in \eqref{eq:crel} completes the proof.
\end{proof}

Let $m_{k+1}:= \mathbb{E}[|X|^{k+1}]^{\frac{1}{k+1}}$. In our proof of the following lemma we will make a reasonable effort to optimise constants, since we will be carrying these constants with us for the remainder of the article.
\begin{lemma} \label{lem:2}
Let $\varphi(t) := \mathbb{E}[e^{itX}]$ be the characteristic function of a random variable $X$ satisfying \eqref{eq:mm} for some $k\geq 2$. Then
\begin{align*}
|\mathrm{Log} \varphi(t) + t^2/2| \leq 4 \frac{ |t|^{k+1}}{(k+1)!} \mathbb{E}[|X|^{k+1} ] \qquad \text{for all $|t| \leq c_1 := \frac{1}{2} \wedge \frac{k+1}{4m_{k+1}}$}.
\end{align*}
\end{lemma}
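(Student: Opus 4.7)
The plan is to bootstrap from Lemma \ref{lem:local1} via the complex logarithm. Write $L(t):=|t|^{k+1}\mathbb{E}[|X|^{k+1}]/(k+1)!$, so Lemma \ref{lem:local1} says $|\varphi(t)-e^{-t^2/2}|\leq 3L(t)$. Setting $\eta(t):=e^{t^2/2}\varphi(t)-1$, this gives $|\eta(t)|\leq 3e^{t^2/2}L(t)$. The first step is to use the constraint $|t|\leq c_1$ to show that $|\eta(t)|$ is bounded away from $1$, which lets us work in the principal branch: for such $t$ both $\varphi(t)$ and $e^{-t^2/2}$ lie in a neighbourhood of $1$, so $\mathrm{Log}\bigl(\varphi(t) e^{t^2/2}\bigr)=\mathrm{Log}\,\varphi(t)+t^2/2$, and the left-hand side equals $\mathrm{Log}(1+\eta(t))$.

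Next I would estimate $|\eta(t)|$ quantitatively. Writing $L(t)=(|t|m_{k+1})^{k+1}/(k+1)!$ and using $|t|m_{k+1}\leq (k+1)/4$, the Stirling lower bound $(k+1)!\geq \sqrt{2\pi(k+1)}((k+1)/e)^{k+1}$ gives $L(t)\leq (e/4)^{k+1}/\sqrt{2\pi(k+1)}$, which is maximised at $k=2$. Combining with $e^{t^2/2}\leq e^{1/8}$ from $|t|\leq 1/2$ yields an explicit numerical bound on $|\eta(t)|$ (roughly $0.25$), comfortably less than $1$.

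Having secured smallness of $\eta$, the final step is the standard estimate $|\mathrm{Log}(1+w)|\leq -\log(1-|w|)$ valid on $|w|<1$. The key quantitative input is that the ratio $-\log(1-r)/r$ stays below $4/3e^{-1/8}$ for $r$ as large as the bound on $|\eta|$ obtained above. Putting things together gives $|\mathrm{Log}\,\varphi(t)+t^2/2|\leq(4/3)\cdot 3e^{t^2/2}L(t)\leq 4L(t)$, which is the desired inequality.

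The main obstacle is purely bookkeeping: verifying that the explicit choice $c_1=\tfrac12\wedge (k+1)/(4m_{k+1})$ is tight enough to both (i) guarantee we stay in the principal branch and (ii) arrange the clean constant $4$ in the conclusion. Both require a careful combination of the Stirling estimate in \eqref{eq:stirling} with the bound $e^{t^2/2}\leq e^{1/8}$ and an accurate use of $-\log(1-r)\leq r/(1-r)$; since the constants $3$, $4$, and $c_1$ are carried through the rest of the paper, this is worth doing sharply as the statement indicates.
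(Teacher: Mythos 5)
Your proposal is correct and follows essentially the same route as the paper: write $\mathrm{Log}\,\varphi(t)+t^2/2=\mathrm{Log}(1+\eta)$ with $\eta=e^{t^2/2}(\varphi(t)-e^{-t^2/2})$, bound $|\eta|$ via Lemma~\ref{lem:local1}, Stirling, and the constraint $|t|\le c_1$ to get $|\eta|\le 1/4$, then apply a linear estimate for $\mathrm{Log}(1+\eta)$. In fact your bookkeeping is slightly more careful than the paper's: the paper uses $|\mathrm{Log}(1+z)|\le \tfrac{4}{3}|z|$, which literally gives a final constant $4e^{1/8}>4$ once the $e^{t^2/2}$ factor in $\eta$ is accounted for, whereas your observation that $-\log(1-r)/r\le \tfrac{4}{3}e^{-1/8}$ for $r\le 1/4$ (since $4\log(4/3)\approx 1.151<1.177$) is exactly what makes the clean constant $4$ come out.
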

\begin{proof}
We would like to write $\mathrm{Log} \varphi(t) + t^2/2 = \mathrm{Log} ( 1 + e^{t^2/2}(\varphi(t) - e^{-t^2/2}))$, and then expand $\mathrm{Log}(1 + z)$ as a power series in $z=e^{t^2/2}(\varphi(t) - e^{-t^2/2})$. To this end, we note that
\begin{align} \label{eq:logexp}
|\mathrm{Log} (1 + z)| \leq \frac{4}{3}|z| \qquad \text{whenever $|z|\leq 1/4$}.
\end{align}
Using the lower bound in \eqref{eq:stirling} and the fact that $k+1 \geq 3$, we have $(k+1)! \geq \sqrt{6 \pi } ((k+1)/e)^{(k+1)} $. Hence by Lemma \ref{lem:local1} we have 
\begin{align*}
| \varphi(t) - e^{ -t^2/2} | \leq  \frac{3}{\sqrt{6 \pi}} \left( \frac{m_{k+1}e|t|}{k+1} \right)^{k+1},
\end{align*}
for $t \in \mathbb{R}$.
Thus whenever $|t| \leq \frac{k+1}{4m_{k+1}}$, again using $k+1 \geq 3$ we have
\begin{align*}
| \varphi(t) - e^{ -t^2/2} | \leq  \frac{3}{\sqrt{6 \pi}} (e/4)^3.
\end{align*}
If $|t| \leq 1/2$, we also have $e^{t^2/2} \leq e^{1/8}$. In particular, we have
\begin{align} \label{eq:c1}
e^{t^2/2}| \varphi(t) - e^{ -t^2/2} | \leq  \frac{3}{\sqrt{6 \pi}} (e/4)^3 e^{1/8} \leq  1/4 \qquad \text{whenever $|t| \leq c_1$}.
\end{align}
Using \eqref{eq:c1}, \eqref{eq:logexp} and Lemma \ref{lem:local1}, it follow that whenever $|t| \leq c_1$ we have
\begin{align*}
\left| \mathrm{Log} \varphi(t) + t^2 \right| \leq 4 \frac{|t|^{k+1}}{(k+1)!} \mathbb{E}[|X|^{k+1}],
\end{align*}
thereby completing the proof.
\end{proof}

The following corollary follows quickly from the previous lemma.
\begin{cor} \label{cor:arrau}
Let $\varphi(t) := \mathbb{E}[e^{itX}]$ be the characteristic function of a random variable $X$ satisfying \eqref{eq:mm} for some $k\geq 2$. Then
\begin{align*}
|\mathrm{Log} \varphi(t/\sqrt{N})^N + t^2/2| \leq 4 N^{ - \frac{k-1}{2}} \frac{ |t|^{k+1}}{(k+1)!} \mathbb{E}[|X|^{k+1} ] \qquad \text{for all $|t| \leq c_1 \sqrt{N} $} .
\end{align*}
\end{cor}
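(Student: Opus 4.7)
The plan is to apply Lemma \ref{lem:2} after the substitution $t \mapsto t/\sqrt{N}$ and then multiply through by $N$. The condition $|t| \leq c_1$ in Lemma \ref{lem:2} becomes $|t/\sqrt{N}| \leq c_1$, i.e. $|t| \leq c_1 \sqrt{N}$, which is exactly the range claimed in the corollary. Lemma \ref{lem:2} then yields
\begin{align*}
\left| \mathrm{Log}\, \varphi(t/\sqrt{N}) + \frac{t^2}{2N} \right| \leq 4 \frac{|t/\sqrt{N}|^{k+1}}{(k+1)!} \mathbb{E}[|X|^{k+1}] = 4 N^{-\frac{k+1}{2}} \frac{|t|^{k+1}}{(k+1)!} \mathbb{E}[|X|^{k+1}].
\end{align*}
Multiplying by $N$ produces the right-hand side $4 N^{1 - \frac{k+1}{2}} \frac{|t|^{k+1}}{(k+1)!} \mathbb{E}[|X|^{k+1}] = 4 N^{-\frac{k-1}{2}} \frac{|t|^{k+1}}{(k+1)!} \mathbb{E}[|X|^{k+1}]$, which is the desired bound.

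The remaining step is to identify $N \,\mathrm{Log}\, \varphi(t/\sqrt{N})$ with $\mathrm{Log}\, \varphi(t/\sqrt{N})^N$. To verify this, I would observe that the argument $c_1 \leq 1/2$ forces $|t|/\sqrt{N}$ to be small, so by \eqref{eq:c1} in the proof of Lemma \ref{lem:2} we have $|\varphi(t/\sqrt{N}) - e^{-t^2/(2N)}| \leq \tfrac{1}{4} e^{-t^2/(2N)}$. In particular $\varphi(t/\sqrt{N})$ lies inside a disc around the positive real number $e^{-t^2/(2N)}$ that is bounded well away from the negative real axis, so $\mathrm{Log}\, \varphi(t/\sqrt{N})$ has imaginary part of small absolute value. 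By continuity in $t$, starting from $t=0$ (where both sides vanish), the principal branch of $\mathrm{Log}$ applied to $\varphi(\cdot/\sqrt{N})^N$ agrees with $N \,\mathrm{Log}\, \varphi(\cdot/\sqrt{N})$ throughout the range $|t| \leq c_1 \sqrt{N}$; equivalently, the imaginary part of $N \,\mathrm{Log}\, \varphi(t/\sqrt{N})$ stays in $(-\pi, \pi]$.

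I expect no serious obstacle here: the corollary is essentially a rescaling of Lemma \ref{lem:2}, and the only subtlety is the branch-cut verification above, which follows from the quantitative smallness already established in the proof of Lemma \ref{lem:2}. The entire argument should occupy just a few lines.
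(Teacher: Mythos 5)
Your core argument---substitute $t/\sqrt N$ into Lemma~\ref{lem:2} and multiply by $N$---is exactly the paper's route: the paper simply writes $|\mathrm{Log}\,\varphi(t/\sqrt N)^N+t^2/2|=N|\mathrm{Log}\,\varphi(t/\sqrt N)+(t/\sqrt N)^2/2|$ and appeals to Lemma~\ref{lem:2}, so that part of your proposal is the same and is fine.

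Where your write-up goes beyond the paper is in trying to justify the identity $\mathrm{Log}\,\varphi(t/\sqrt N)^N=N\,\mathrm{Log}\,\varphi(t/\sqrt N)$, and here your argument as written is circular. From \eqref{eq:c1} you correctly deduce that each individual $\varphi(t/\sqrt N)$ lies in a disc around a positive real of relative radius $1/4$, so $|\mathrm{Arg}\,\varphi(t/\sqrt N)|\le\arcsin(1/4)<\pi/12$. But for the principal-branch identity $\mathrm{Log}\,z^N=N\,\mathrm{Log}\,z$ you need $N\cdot\mathrm{Arg}\,\varphi(t/\sqrt N)\in(-\pi,\pi]$, i.e.\ a bound that shrinks like $1/N$, and $\arcsin(1/4)$ does not shrink with $N$. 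The ``continuity from $t=0$'' step does not rescue this: the two sides can only disagree if $\varphi(\cdot/\sqrt N)^N$ crosses the negative real axis, and ruling that out requires precisely the uniform bound $N\,|\mathrm{Arg}\,\varphi(t/\sqrt N)|<\pi$ you are trying to establish. From Lemma~\ref{lem:2} one only gets $N\,|\mathrm{Arg}\,\varphi(t/\sqrt N)|\le 4N^{-(k-1)/2}|t|^{k+1}\mathbb{E}[|X|^{k+1}]/(k+1)!$, which over the full range $|t|\le c_1\sqrt N$ can exceed $\pi$ (it grows like $N$ at the endpoint), so the stronger uniform claim is not automatic.

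The good news is that this branch-cut wrinkle is shared with the paper's ``Plainly'' and is harmless downstream. In the proof of Theorem~\ref{thm:creat} the corollary is used only through the inequality $|e^z-1|\le|z|e^{|z|}$ with $e^z=e^{t^2/2}\varphi(t/\sqrt N)^N$; but \emph{any} logarithm of $e^{t^2/2}\varphi(t/\sqrt N)^N$ works there, and the choice $z=N\,\mathrm{Log}\,\varphi(t/\sqrt N)+t^2/2$ (whose modulus Lemma~\ref{lem:2} bounds directly after rescaling, with no branch issue whatsoever) suffices. So the cleanest repair, both of your proposal and of the paper, is to state the corollary for $N\,\mathrm{Log}\,\varphi(t/\sqrt N)+t^2/2$ rather than for $\mathrm{Log}\,\varphi(t/\sqrt N)^N+t^2/2$, which is a cosmetic change and leaves the rest of the argument untouched. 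Alternatively, one can observe that whenever the right-hand side of the corollary is $<\pi$ the two quantities genuinely coincide, and whenever it is $\ge\pi$ the claimed bound is essentially vacuous (the imaginary part of the principal logarithm is at most $\pi$ anyway), so the stated inequality holds up to an absolute factor.
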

\begin{proof}
Plainly 
\begin{align*}
|\mathrm{Log} \varphi(t/\sqrt{N})^N + t^2/2| = N| \mathrm{Log} \varphi(t/\sqrt{N}) + (t/\sqrt{N})^2/2 |.
\end{align*}
Now use the previous result, Lemma \ref{lem:2}, with $t/\sqrt{N}$ in place of $t$.
\end{proof}

We are now ready to state and prove the main result of this section, Theorem \ref{thm:creat}.

\begin{thm} \label{thm:creat}
Let $\varphi(t) := \mathbb{E}[e^{itX}]$ be the characteristic function of a random variable $X$ satisfying \eqref{eq:mm} for some $k\geq 2$. 
Let $c_0 = \frac{1}{2} \wedge \frac{1}{4} ((k+1)/m_{k+1}) \wedge 2 ((k+1)/m_{k+1})^{\frac{k+1}{k-1}}$. Then 
\begin{align} \label{eq:creat}
\left|  \varphi(t/\sqrt{N})^N - e^{-t^2/2} \right| \leq 4 N^{ - \frac{k-1}{2}}  \frac{ |t|^{k+1}}{(k+1)!} \mathbb{E}[|X|^{k+1} ]e^{ - t^2/4} \qquad \text{for all $|t| \leq c_0 \sqrt{N} $} .
\end{align}

\end{thm}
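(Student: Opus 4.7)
The plan is to apply Corollary \ref{cor:arrau} and exponentiate the resulting logarithmic bound. For $|t| \leq c_1\sqrt{N}$, which is guaranteed by the first two terms in the minimum defining $c_0$ (so that $c_0 \leq c_1$), Corollary \ref{cor:arrau} gives the decomposition
\[
N\,\mathrm{Log}\,\varphi(t/\sqrt{N}) = -\tfrac{t^2}{2} + \epsilon(t), \qquad |\epsilon(t)| \leq R(t) := \frac{4\,\mathbb{E}[|X|^{k+1}]}{(k+1)!\,N^{(k-1)/2}}\,|t|^{k+1}.
\]
Exponentiating yields the identity
\[
\varphi(t/\sqrt{N})^N - e^{-t^2/2} = e^{-t^2/2}\bigl(e^{\epsilon(t)} - 1\bigr).
\]

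The second step is to apply the elementary inequality $|e^z - 1| \leq |z|\, e^{|z|}$ with $z = \epsilon(t)$, producing
\[
\bigl|\varphi(t/\sqrt{N})^N - e^{-t^2/2}\bigr| \leq R(t)\, e^{R(t) - t^2/2}.
\]
To recover the factor $e^{-t^2/4}$ appearing on the right-hand side of \eqref{eq:creat}, it suffices to establish $R(t) \leq t^2/4$ for all $|t| \leq c_0\sqrt{N}$: granted this, $e^{R(t) - t^2/2} \leq e^{-t^2/4}$, and \eqref{eq:creat} follows since $R(t) \leq 4R(t)$.

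The remaining task is thus to verify $R(t) \leq t^2/4$ for $|t| \leq c_0\sqrt{N}$. Solving this inequality directly gives the equivalent condition $|t| \leq \sqrt{N}\,\bigl((k+1)!/(16\,m_{k+1}^{k+1})\bigr)^{1/(k-1)}$. One then compares this against the third term $2\bigl((k+1)/m_{k+1}\bigr)^{(k+1)/(k-1)}\sqrt{N}$ in the minimum defining $c_0$ by invoking the Stirling estimate \eqref{eq:stirling} on $(k+1)!$. The main obstacle is tracking these constants with enough precision to bridge the factorial growth $(k+1)!$ and the polynomial $(k+1)^{k+1}$ that arises on rearrangement; this is the only genuinely delicate piece of an otherwise routine exponentiation argument.
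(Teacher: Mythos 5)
Your strategy coincides with the paper's proof: you invoke Corollary \ref{cor:arrau} to bound $\bigl|\mathrm{Log}\,\varphi(t/\sqrt{N})^N + t^2/2\bigr|$ by $R(t)=4N^{-(k-1)/2}|t|^{k+1}\,\mathbb{E}[|X|^{k+1}]/(k+1)!$, apply $|e^z-1|\leq|z|e^{|z|}$, and reduce the statement to checking $R(t)\leq t^2/4$ on $|t|\leq c_0\sqrt{N}$. (The paper calls $R(t)$ by the name $F(t)$.) Up to that reduction you are exactly on track.

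The problem is in the step you flag as ``the only genuinely delicate piece'' and leave undone. You correctly solve $R(t)\leq t^2/4$ to obtain the exact threshold $|t|\leq\sqrt{N}\bigl((k+1)!/(16\,m_{k+1}^{k+1})\bigr)^{1/(k-1)}$, and then propose to check that this exceeds $c_2\sqrt{N}$ with $c_2=2\bigl((k+1)/m_{k+1}\bigr)^{(k+1)/(k-1)}$ ``by invoking Stirling.'' But raising that required inequality to the power $k-1$ and cancelling $m_{k+1}^{k+1}$ gives $2^{k+3}(k+1)^{k+1}\leq(k+1)!$, which fails for every $k\geq 2$ since $(k+1)!\leq(k+1)^{k+1}$. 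So this is not a matter of bookkeeping precision; the comparison is structurally in the wrong direction, and no amount of care with Stirling can rescue it. You have inherited this from the paper, whose own verification at this point appears to contain a sign error: from $\frac{16}{\sqrt{2\pi}}\alpha^{k-1}(m_{k+1}e/(k+1))^{k+1}\leq 1$ one gets $\alpha\leq\bigl(\sqrt{2\pi}/(16e^{k+1})\bigr)^{1/(k-1)}\bigl((k+1)/m_{k+1}\bigr)^{(k+1)/(k-1)}$ with $e^{k+1}$ in the \emph{denominator}, whereas the paper writes $\bigl(e^{k+1}\sqrt{2\pi}/16\bigr)^{1/(k-1)}$ and asserts this prefactor is at least $2$. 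Substituting the paper's claimed threshold back in fact yields $F(t)/(t^2/4)\leq e^{2(k+1)}$, not $\leq 1$. You should not take the final step on trust: as stated, the choice $c_2=2\bigl((k+1)/m_{k+1}\bigr)^{(k+1)/(k-1)}$ is too large for this argument, and a genuinely smaller constant (roughly $\bigl(\sqrt{2\pi}/(16e^{k+1})\bigr)^{1/(k-1)}$ in place of $2$) is what the Stirling estimate actually delivers.
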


\begin{proof}
By the inequality $|e^z-1| \leq |z|e^{|z|}$ we have
\begin{align*}
\left|  \varphi(t/\sqrt{N})^N - e^{-t^2/2} \right| & =e^{-t^2/2} \left|  e^{t^2/2} \varphi(t/\sqrt{N})^N - 1\right| \\
& \leq |\mathrm{Log} \varphi(t/\sqrt{N})^N + t^2/2|  e^{ - t^2/2 + |\mathrm{Log} \varphi(t/\sqrt{N})^N + t^2/2| },
\end{align*}
for all $t \in \mathbb{R}$. 
For $|t| \leq  c_1\sqrt{N}$ by Corollary \ref{cor:arrau} we have 
\begin{align} \label{eq:xa}
&\left|  \varphi(t/\sqrt{N})^N - e^{-t^2/2} \right| \leq F(t) e^{ - t^2/2 + F(t)},
\end{align}
where $F(t):= 4 N^{ - \frac{k-1}{2}}  \frac{ |t|^{k+1}}{(k+1)!} \mathbb{E}[|X|^{k+1} ]$.

In addition to the requirement $|t| \leq c_1 \sqrt{N} $, we now search for a condition on $t$ which forces $F(t) \leq t^2/4$, so that the exponential term in \eqref{eq:xa} is bounded above by $e^{-t^2/4}$, thereby ensuring that \eqref{eq:xa} is actually an effective bound. \color{black} Extracting from \eqref{eq:stirling} the rough lower bound $(k+1)! \geq \sqrt{2 \pi} ((k+1)/e)^{k+1}$, we have 
\begin{align*}
\frac{F(t)}{t^2/4} \leq \frac{16}{\sqrt{2 \pi}} (|t|/\sqrt{N})^{k-1} (m_{k+1}e/(k+1))^{k+1}.
\end{align*}
The bound $\frac{F(t)}{t^2/4} \leq 1$ is then guaranteed to hold whenever we have
\begin{align*}
\frac{|t|}{\sqrt{N}} \leq \left( e^{k+1}\sqrt{2 \pi}/16 \right)^{\frac{1}{k-1}} \left( \frac{k+1}{m_{k+1}} \right)^{\frac{k+1}{k-1}}.
\end{align*}
The quantity $\left( e^{k+1}\sqrt{2 \pi}/16 \right)^{\frac{1}{k-1}} $ is decreasing in $k$, and is bounded below by $2$. Thus, setting $c_2 := 2 ((k+1)/m_{k+1} )^{\frac{k+1}{k-1}}$, the bound $|t| \leq c_2 \sqrt{N}$ guarantees $F(t) \leq t^2/4$. Accordingly, for $|t| \leq (c_1 \wedge c_2)\sqrt{N}$, by \eqref{eq:xa} we have 
\begin{align} \label{eq:xa2}
&\left|  \varphi(t/\sqrt{N})^N - e^{-t^2/2} \right| \leq F(t) e^{ - t^2/4}.
\end{align}
Since $c_0 = c_1 \wedge c_2$, \eqref{eq:creat} holds, completing the proof.
\end{proof}

\section{Global control of characteristic functions under Assumption \ref{assu:main}} \label{sec:global}

The following lemma says that away from the origin, the characteristic functions of random variables satisfying Assumption \ref{assu:main} have modulus bounded away from one.

\begin{lemma} \label{lem:charcont}
Let $\varphi(t) := \mathbb{E}[e^{itX}]$ be the characteristic function of a random variable $X$ whose law $\mu$ satisfies Assumption 1. Then provided $0 < t _0  \leq 4/w$ we have 
\begin{align*}
|\varphi(t)| \leq e^{ - hw^3t_0^2/32} \qquad \text{for all $|t| \geq t_0$}.
\end{align*}
\end{lemma}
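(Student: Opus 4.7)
The plan is to split $\mu$ along the rectangle guaranteed by Assumption~\ref{assu:main} and then use the triangle inequality at the level of characteristic functions. Writing
\[
\mu(\mathrm{d}s) = h \mathbf{1}_{[a,a+w)}(s)\,\mathrm{d}s + \nu(\mathrm{d}s),
\]
where $\nu$ is a nonnegative Borel measure of total mass $1-hw$, one has $\bigl|\int e^{its}\,\nu(\mathrm{d}s)\bigr| \leq \nu(\mathbb{R}) = 1-hw$ trivially, while $\int_a^{a+w} e^{its}\,\mathrm{d}s = e^{it(a+w/2)}\cdot\frac{2\sin(tw/2)}{t}$. The triangle inequality therefore delivers the scalar estimate
\[
|\varphi(t)| \leq hw\,\bigl|\mathrm{sinc}(tw/2)\bigr| + (1-hw), \qquad \mathrm{sinc}(u) := \sin(u)/u.
\]

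The problem reduces to a single-variable inequality: I would prove that
\[
\sup_{|u| \geq u_0} |\mathrm{sinc}(u)| \leq 1 - \tfrac{1}{8} u_0^2 \qquad \text{for all } 0 \leq u_0 \leq 2.
\]
Granting this, I substitute $u = tw/2$ and $u_0 = t_0 w/2$; the constraint $u_0 \leq 2$ is exactly the hypothesis $t_0 \leq 4/w$. Combined with the previous display, this yields
\[
|\varphi(t)| \leq hw\bigl(1 - t_0^2 w^2/32\bigr) + (1-hw) = 1 - hw^3 t_0^2/32 \leq e^{-hw^3 t_0^2/32}
\]
for all $|t| \geq t_0$, where the last step uses $1-x \leq e^{-x}$.

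The only real work is the sinc bound, which I would handle by splitting $|u| \geq u_0$ into the inner range $u_0 \leq |u| \leq \pi$ and the tail $|u| > \pi$. On the inner range, $\sin(u)/u$ is positive and strictly decreasing (its derivative $(u\cos u - \sin u)/u^2$ is negative on $(0,\pi)$, since the numerator vanishes at $0$ and has derivative $-u\sin u < 0$), so the supremum is $\sin(u_0)/u_0$. The alternating Taylor estimate $\sin(u)/u \leq 1 - u^2/6 + u^4/120$ then gives $\sin(u_0)/u_0 \leq 1 - u_0^2/8$ provided $u_0^2 \leq 5$, which holds since $u_0 \leq 2$. On the tail, the trivial envelope $|\sin(u)/u| \leq 1/|u| \leq 1/\pi < 1/2$ already beats $1 - u_0^2/8$ whenever $u_0 \leq 2$. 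Combining the two cases completes the argument; I do not anticipate any obstacle beyond executing this routine one-variable calculus cleanly.
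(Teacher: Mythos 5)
Your proof is correct and follows essentially the same route as the paper: the same decomposition of $\mu$ into the rectangular piece and a remainder of mass $1-hw$, the same reduction to the sinc estimate $\sup_{|u|\ge u_0}|\sin(u)/u| \le 1 - u_0^2/8$ for $u_0 \in [0,2]$, and the same final step $1-x \le e^{-x}$. The only cosmetic difference is that you justify the sinc bound by splitting at $\pi$ and using monotonicity plus the alternating Taylor estimate, whereas the paper splits at $2$ and cites the two elementary bounds $\sin(x)/x \le 1 - x^2/8$ on $|x|\le 2$ and $\sin(x)/x \le 1/|x|$ on $|x|\ge 2$.
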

\begin{proof}
Let $\xi$ be the measure on $\mathbb{R}$ given by $\xi(\mathrm{d}u) = h \mathrm{1}_{u \in [a,a+w]} \mathrm{d}u$. Then $\xi$ is a measure of total mass $hw$ and $\tilde{\mu} := \mu-\xi$ is a measure on $\mathbb{R}$ with total mass $1-hw$. 

Thus
\begin{align} \label{eq:schumann} 
|\varphi(t)| &\leq \left| \int_{-\infty}^\infty e^{ it u } \tilde{\mu}(\mathrm{d}u)  \right|  + \left| \int_{-\infty}^\infty e^{ it u } \xi(\mathrm{d}u) \right| \nonumber \\
&\leq 1 - hw + h \left| \int_a^{a+w} e^{ i t u } \mathrm{d} u \right| \nonumber \\
& = 1 - hw + h w \frac{ \sin(wt/2)}{wt/2} .
\end{align}
It is easily verified that we have both 
\begin{align*}
\frac{\sin(x)}{x} \leq 1 - x^2/8 \qquad |x| \leq 2, \qquad \text{and} \qquad \frac{\sin(x)}{x} \leq \frac{1}{|x|} \qquad |x| \geq 2.
\end{align*}
Together these bounds imply that for any $y \in [0,2]$ we have 
\begin{align} \label{eq:newky}
 \frac{\sin(x)}{x} \leq 1 - y^2/8 \qquad \text{for all $|x| \geq y$}.
\end{align}
Note that $t_0 \leq 4/w$ implies $y = wt_0/2 \in [0,2]$. In particular, using \eqref{eq:newky} in \eqref{eq:schumann}, for any $t$ with $|t| \geq t_0$ we have 
\begin{align*}
|\varphi(t)| \leq 1 - hw + hw (1 - (wt_0/2)^2/8) \leq 1 - hw^3 t_0^2/32 \leq e^{ - hw^3t_0^2/32},
\end{align*}
as required.
\end{proof}

\section{Proof of Theorem \ref{thm:main}} \label{sec:mainproof}

In this section we prove the following refinement of Theorem \ref{thm:main}. 

\begin{thm} \label{thm:main2}
Let $X_1,\ldots,X_N$ be independent random variables distributed like $X$, where $X$ satisfies \eqref{eq:mm} for some $k \geq 2$, and the law $\mu$ of $X$ satisfies Assumption \ref{assu:main}. Then
\begin{align} \label{eq:main2}
\mathrm{d}_{\mathrm{KS}}\left(\frac{X_1+\ldots+X_N}{\sqrt{N}},  G \right) \leq C(k) \frac{\mathbb{E}[|X|^{k+1}]}{N^{\frac{k-1}{2}}}  +  3e^{ - \tilde{c}(k,m_{k+1}) hw^3 N},
\end{align}
where 
\begin{align*}
C(k) :=  \frac{1}{\sqrt{\pi}} 2^{k+3} \Gamma((k+1)/2)/(k+1)!,
\end{align*}
and
\begin{align*}
\tilde{c}(k,m_{k+1}) := \frac{1}{160} \wedge \frac{1}{640} \left( \frac{k+1}{m_{k+1}} \right)^2 \wedge \frac{1}{10} \left( \frac{k+1}{m_{k+1}} \right)^{ 2 \frac{k+1}{k-1} }.
\end{align*}
\end{thm}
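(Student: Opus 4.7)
The plan is to feed the Berry smoothing inequality \eqref{eq:berry smoothing 2} with two different bounds on the integrand $|\varphi(t/\sqrt{N})^N - e^{-t^2/2}|/|t|$ in two disjoint regions: a central region $|t| \leq c_0\sqrt{N}$, where the moment-matching bound of Theorem \ref{thm:creat} applies directly, and a tail region $|t| > c_0\sqrt{N}$, where no cancellation is available so we use the triangle inequality $|\varphi(t/\sqrt{N})^N - e^{-t^2/2}| \leq |\varphi(t/\sqrt{N})|^N + e^{-t^2/2}$ and bound each summand separately.

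For the central region, dividing the bound of Theorem \ref{thm:creat} by $|t|$ and, for a clean upper bound, extending the integration to all of $\mathbb{R}$ yields
\begin{align*}
\frac{1}{\pi}\int_{-c_0\sqrt{N}}^{c_0\sqrt{N}} \frac{|\varphi(t/\sqrt{N})^N - e^{-t^2/2}|}{|t|}\, \mathrm{d} t \leq \frac{4\, \mathbb{E}[|X|^{k+1}]}{\pi\,(k+1)!\, N^{(k-1)/2}} \int_{-\infty}^{\infty} |t|^k e^{-t^2/4}\, \mathrm{d}t,
\end{align*}
and the standard Gaussian moment $\int_{-\infty}^\infty |t|^k e^{-t^2/4} \mathrm{d}t = 2^{k+1}\Gamma((k+1)/2)$ produces the leading polynomial term, which slots into $C(k)\mathbb{E}[|X|^{k+1}]/N^{(k-1)/2}$. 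For the tail, the Gaussian piece is bounded by $e^{-c_0^2 N/2}/(c_0\sqrt{N})$, while for the characteristic-function piece I would apply Lemma \ref{lem:charcont} with $t_0 = c_0$; the hypothesis $c_0 \leq 4/w$ holds automatically since $c_0 \leq 1/2$ and $w \leq 1$. This gives $|\varphi(t/\sqrt{N})|^N \leq e^{-hw^3 c_0^2 N/32}$ uniformly for $|t| \geq c_0\sqrt{N}$, so the tail integral is at most $(2/\pi)\log(L/(c_0\sqrt{N}))\, e^{-hw^3 c_0^2 N/32}$.

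To close, I would choose $L$ exponentially large, e.g.\ $L = c_0\sqrt{N}\, e^{\alpha hw^3 N}$ for a suitably small $\alpha > 0$, so that $4/L$ is itself exponentially small and the logarithmic factor $\log(L/(c_0\sqrt{N})) = \alpha h w^3 N$, once multiplied by $e^{-hw^3 c_0^2 N/32}$, can be absorbed into a cleaner $e^{-\tilde{c}\, hw^3 N}$ bound via the elementary estimate $x e^{-\beta x} \leq \tfrac{2}{\beta e}\, e^{-\beta x/2}$, at the cost of halving the exponent. The main technical obstacle is then constant-tracking: the three branches of $\tilde{c}(k,m_{k+1})$ appearing in the statement correspond, via the relation $\tilde{c} \approx c_0^2/64$ (halved from $c_0^2/32$ by the absorption step), to the three branches $\tfrac{1}{2}$, $\tfrac{1}{4}(k+1)/m_{k+1}$, and $2((k+1)/m_{k+1})^{(k+1)/(k-1)}$ in the definition of $c_0$ from Theorem \ref{thm:creat}; squaring each branch and dividing by the appropriate constant should reproduce the $\tfrac{1}{160}$, $(k+1)^2/(640\, m_{k+1}^2)$, and $((k+1)/m_{k+1})^{2(k+1)/(k-1)}/10$ branches up to numerical slack, and the overall factor of $3$ in front of the exponential is where the $4/L$ term, the Gaussian tail, and the absorbed constant $\tfrac{2}{\beta e}$ all collect.
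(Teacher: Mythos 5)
Your decomposition, lemmas, and choice of exponentially large $L$ are exactly the paper's: split $[-L,L]$ at $|t|=c_0\sqrt{N}$, use Theorem \ref{thm:creat} in the core, and in the tail apply the triangle inequality with the Gaussian decay on one piece and Lemma \ref{lem:charcont} (with $t_0 = c_0 \leq 4/w$, valid since $c_0\leq 1/2\leq 4/w$) on the other, then set $L$ proportional to $e^{\Theta(hw^3 N)}$. So the approach is the same. Two points are worth flagging, though.

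First, your constant bookkeeping falls short of the theorem's stated $\tilde{c}$. You absorb the logarithmic factor via $x e^{-\beta x}\le \tfrac{2}{\beta e}e^{-\beta x/2}$, which halves the exponent and lands you at $\tilde{c}\approx c_0^2/64$; but the three branches $\tfrac{1}{160}$, $\tfrac{1}{640}((k+1)/m_{k+1})^2$, $\tfrac{1}{10}((k+1)/m_{k+1})^{2(k+1)/(k-1)}$ are exactly $c_0^2/40$, not $c_0^2/64$. To reach $/40$ one must lose only a factor $4/5$ in the exponent rather than $1/2$; the paper takes $L=2\pi e^{hw^3 c_0^2 N/32}$ and then uses the single sharper inequality $\tfrac{2}{\pi}\bigl(2+\log(2\pi)+x\bigr)e^{-x}\le 3e^{-4x/5}$ to fold $4/L$, the Gaussian tail, and the log term into $3e^{-x\cdot 4/5}$ at once, which simultaneously delivers the prefactor $3$. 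As written, your collection of the $4/L$, Gaussian-tail, and absorbed-constant contributions would exceed $3$ and your exponent would be too small, so the exact inequality \eqref{eq:main2} would not follow without this tightening.

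Second, your tail estimates $\int_{c_0\sqrt{N}}^{L} e^{-t^2/2}\,\mathrm{d}t/t$ and $\log\bigl(L/(c_0\sqrt{N})\bigr)$ both implicitly assume $c_0\sqrt{N}\geq 1$ (otherwise the Gaussian-tail integral is not $\lesssim e^{-c_0^2 N/2}$, and $\log(L/(c_0\sqrt{N}))$ is not controlled by $\log L$). The paper disposes of the complementary case $c_0\sqrt{N}<1$ by observing that then $3e^{-\tilde{c}hw^3 N}\geq 3e^{-1/40}\geq 1\geq \mathrm{d}_{\mathrm{KS}}$, so \eqref{eq:main2} holds trivially; your argument needs this (or an equivalent) to be complete.
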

\begin{proof}
Recall from Theorem \ref{thm:creat} the constant $c_0 = \frac{1}{2} \wedge \frac{1}{4} ((k+1)/m_{k+1}) \wedge 2 ((k+1)/m_{k+1})^{\frac{k+1}{k-1}}$. Let us assume initially that $c_0 \sqrt{N} \geq 1$. 

By \eqref{eq:berry smoothing 2} and the triangle inequality, for any $L \geq 1$ we have 
\begin{align} \label{eq:berry smoothing 22}
\mathrm{d}_{\mathrm{KS}}\left(\frac{X_1+\ldots+X_N}{\sqrt{N}},  G \right) &\leq \frac{4}{L} + \frac{1}{\pi} (I_1 + I_2 + I_3),
\end{align}
where
\begin{align*}
 I_1 &:= \int_{|t| \leq c_0 \sqrt{N}} \frac{| \varphi(t/\sqrt{N})^N - e^{ -t^2/2} |}{|t|} \mathrm{d} t ,
\end{align*}
and
\begin{align*}
I_2 :=  \mathrm{1}_{\{ L \geq c_0\sqrt{N} \}} \int_{c_0 \sqrt{N} < |t| \leq L} e^{-t^2/2} \frac{\mathrm{d}t}{|t|}  \qquad \text{and} \qquad I_3 :=   \mathrm{1}_{\{ L \geq c_0\sqrt{N} \}} \int_{c_0 \sqrt{N} < |t| \leq L} |\varphi(t/\sqrt{N})|^N \frac{\mathrm{d}t}{|t|}.
\end{align*}
We will not specify the value of $L$ just yet, but instead control each of the terms individually for a free choice of $L$, and then later optimise this choice of $L$.

First we consider $I_1$. By Theorem \ref{thm:creat} we have
\begin{align*}
 I_1 &\leq 4N^{ - \frac{k-1}{2} } \mathbb{E}[|X|^{k+1}] \frac{1}{(k+1)!} \int_{ |t| \leq c_0 \sqrt{N}} |t|^{k+1} e^{-t^2/4} \frac{\mathrm{d}t}{|t|}.
\end{align*}
Now 
\begin{align*}
\int_{ |t| \leq c_0 \sqrt{N}} |t|^{k+1} e^{-t^2/4} \frac{\mathrm{d}t}{|t|} \leq 2^{\frac{k+1}{2}} \sqrt{2 \pi } \int_{-\infty}^\infty |u|^k \frac{e^{-u^2/2}\mathrm{d}u}{\sqrt{2\pi}} = \sqrt{\pi}2^{k+1} \Gamma((k+1)/2),
\end{align*}
where we have used the fact that if $G$ is standard Gaussian, then $\mathbb{E}[|G|^j] = 2^{j/2}\Gamma((j+1)/2)$. Thus,
\begin{align} \label{eq:cr1} 
\frac{1}{\pi} I_1 \leq C(k) \mathbb{E}[|X|^{k+1}] N^{ - \frac{k-1}{2}} 
\end{align}
where
\begin{align*}
C(k) = \frac{1}{\sqrt{\pi}} 2^{k+3} \Gamma((k+1)/2)/(k+1)!,
\end{align*}
as in the statement of the theorem.

As for $I_2$, whenever $c_0 \sqrt{N} \geq 1$ it is straightforward to verify that 
\begin{align} \label{eq:cr2}
I_2 \leq 2e^{ - \frac{c_0^2}{2}N}.
\end{align}
Finally, we consider $I_3$. Since $w \leq 1$ 
and $c_0 \leq 1/2$, we have $c_0 \leq 4/w$. Appealing to Lemma \ref{lem:charcont}, whenever $|t| \geq c_0 \sqrt{N}$ we then have
\begin{align*}
|\varphi(t/\sqrt{N})| \leq e^{ - hw^3c_0^2/32},
\end{align*}
so that 
\begin{align} \label{eq:cr3}
I_3 \leq 2 e^{ - hw^3c_0^2N/32} \mathrm{1}_{\{ L \geq c_0\sqrt{N} \}} \int_{c_0\sqrt{N}}^L \frac{\mathrm{d}t}{t}  \leq 2 e^{ - hw^3c_0^2N/32} \log(L),
\end{align}
where again we have used the fact that $c_0 \sqrt{N} \geq 1$. 

Aggregating \eqref{eq:cr1}, \eqref{eq:cr2} and \eqref{eq:cr3} in \eqref{eq:berry smoothing 22}, we see that provided $c_0 \sqrt{N} \geq 1$ we have
\begin{align} \label{eq:berry smoothing 33}
\mathrm{d}_{\mathrm{KS}}\left(\frac{X_1+\ldots+X_N}{\sqrt{N}},  G\right) \leq \frac{4}{L} + C(k) \frac{\mathbb{E}[|X|^{k+1}]}{N^{\frac{k-1}{2}}}  + \frac{2}{\pi} e^{ - \frac{c_0^2}{2}N} +  \frac{2}{\pi} e^{ - hw^3c_0^2N/32} \log L.
\end{align}
Letting $L = 2  \pi e^{ hw^3c_0^2 N /32 }$ in \eqref{eq:cr3} we obtain
\begin{align} \label{eq:berry smoothing 33}
\mathrm{d}_{\mathrm{KS}}\left(\frac{X_1+\ldots+X_N}{\sqrt{N}},  G\right) \leq C(k) \frac{\mathbb{E}[|X|^{k+1}]}{N^{\frac{k-1}{2}}}  + \frac{2}{\pi}(2 + \log(2\pi) +  \frac{1}{32} hw^3 c_0^2 N ) e^{ - hw^3c_0^2N/32},
\end{align}
where we have used the fact that $hw^3 \leq 1$ to smuggle the $e^{ - \frac{c_0^2}{2}N} $ term in with the $e^{ - hw^3c_0^2N/32}$ terms. 

Since $\frac{2}{\pi}(2 + \log(2\pi ) + x ) )e^{-x} \leq 3e^{-4x/5}$ for all $x \geq 0$, setting $x = hw^3c_0^2N/32$ we have the simplification
\begin{align} \label{eq:berry smoothing 44}
\mathrm{d}_{\mathrm{KS}}\left(\frac{X_1+\ldots+X_N}{\sqrt{N}},  G\right)
\leq C(k) \frac{\mathbb{E}[|X|^{k+1}]}{N^{\frac{k-1}{2}}}  + 3 e^{ - hw^3c_0^2N/40}.
\end{align}
Finally note that 
\begin{align*}
\frac{1}{40} c_0^2 = \frac{1}{40} \left\{ \frac{1}{4} \wedge \left( \frac{k+1}{4m_{k+1}} \right)^2 \wedge 4 \left( \frac{k+1}{m_{k+1}} \right)^{ 2 \frac{k+1}{k-1} } \right\} = \tilde{c}(k,m_{k+1} ),
\end{align*}
completing the proof of Theorem \ref{thm:main2} under the assumption that $c_0\sqrt{N} \geq 1$. Now note that if $c_0\sqrt{N} < 1$, since by definition $h,w \leq 1$, we have
\begin{align*}
3e^{ - \frac{hw^3}{40} c_0^2 N } \geq 3e^{ - \frac{1}{40} } \geq 1.
\end{align*} 
However, the Kolmogorov-Smirnov distance $\mathrm{d}_{\mathrm{KS}}\left(\frac{X_1+\ldots+X_N}{\sqrt{N}},  G\right) $ is at most $1$, hence the bound \eqref{eq:main2} continues to hold when $c_0\sqrt{N}<1$.
\end{proof}

\begin{proof}[Proof of Theorem \ref{thm:main} assuming Theorem \ref{thm:main2}]
To obtain the simpler statement from the introduction, Theorem \ref{thm:main}, first we note that for $x \geq 1$, $C(x)$ is a decreasing function of $x$, and that $C(3) \leq 3$.

Then we note that, since $m_{k+1} \geq 1$, and $(k+1) \geq 4$ we have
\begin{align*}
\tilde{c}(k,m_{k+1}) & \geq \frac{1}{160}m_{k+1}^{-(k+1)},
\end{align*}
completing the proof of Theorem \ref{thm:main}.
\end{proof}

Finally, with Theorem \ref{thm:main}, and its corollary, Corollary \ref{cor:symmetric} now established, we give a proof of \eqref{eq:main4}. 

\begin{proof}[Proof of \eqref{eq:main4}]
Let $N \geq 100000$. Then
\begin{align} \label{eq:giant}
\delta_N^2 = 16(\log N/N)^{1/2} \leq 0.2.
\end{align}
Recall the definition \eqref{eq:special} of $\mu_{h,w}$, and $x_{h,w}$ as it appears below \eqref{eq:special}. Plainly, for $w \leq 1$,
\begin{align} \label{eq:4up}
\int_{-\infty}^\infty s^4 \mu_{h,w}(\mathrm{d}s) \leq x_{h,w}^4 \leq (1 - hw)^{-2}.
\end{align}
In particular, setting $h = w= \delta_N$ and using \eqref{eq:giant}, we see that the fourth moment of $\nu_N$ is at most $(5/4)^2$.

It follows from Corollary \ref{cor:symmetric} that for $X_1,\ldots,X_N$ independent and identically distributed like $\nu_N$ we have
\begin{align} \label{eq:mous}
\mathrm{d}_{\mathrm{KS}}\left(\frac{X_1+\ldots+X_N}{ \sqrt{N}},  G\right) &\leq 3 \left\{ \frac{ (5/4)^2  }{N} + e^{ - \frac{(4/5)^2}{160 } 4^4 \log N } \right\} \leq 8/N,
\end{align}
as required.

\end{proof}

\section{Proof of Theorem \ref{thm:reverse}} \label{sec:reverse}

Recall the definition \eqref{eq:special} of the probability law $\mu_{h,w}$.
The main task in proving Theorem \ref{thm:reverse} is establishing the inequality \eqref{eq:counter}, which we state here as a lemma involving explicit constants.
\begin{lemma} \label{lem:counter}
Let $0 < h,w \leq 1$ such that $hw \leq 1/2$. Suppose $hw^3N \leq 1/24$. Then whenever $X_1,\ldots,X_N$ are i.i.d.\ with law $\mu_{h,w}$ we have
\begin{align*}
\mathrm{d}_{\mathrm{KS}}\left( \frac{X_1 + \ldots + X_N}{ \sqrt{N} }, G \right) \geq \frac{1}{50}\frac{1}{\sqrt{N}}.
\end{align*}

\end{lemma}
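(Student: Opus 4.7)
The plan is to prove Lemma \ref{lem:counter} by exhibiting a short interval $I \subset \mathbb{R}$ for which the excess $\mathbb{P}(S_N \in I) - \mathbb{P}(G \in I/\sqrt{N})$ is of order $1/\sqrt{N}$; the standard atom/interval argument (if $\mathbb{P}(Y \in [a,b]) - \mathbb{P}(Z \in [a,b]) \geq \delta$ then $d_{\mathrm{KS}}(Y,Z) \geq \delta/2$) then gives the desired lower bound on the KS distance. Heuristically, the hypothesis $hw^3 N \leq 1/24$ is exactly what ensures that the Bernoulli ``lattice'' structure of $S_N$ is not smeared out by the uniform perturbation, so that steep near-jumps persist in the distribution function of $S_N/\sqrt{N}$.

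Concretely, I would begin by using the mixture structure of $\mu_{h,w}$ to write each $X_i$ as either a $\pm x$ Bernoulli (with probability $1-hw$) or a uniform $V_i$ on $[-w/2, w/2]$ (with probability $hw$). Letting $M$ be the number of uniform samples, conditionally on $M = m$ we have $S_N = B_{N-m} + U_m$, where $B_{N-m}$ is a rescaled simple random walk and $U_m$ is an independent sum of $m$ i.i.d.\ uniforms. By Markov's inequality, $\mathbb{P}(M \leq 2hwN) \geq 1/2$; on this event $\mathrm{Var}(U_m) = mw^2/12 \leq hw^3 N/6 \leq 1/144$, so by Chebyshev $\mathbb{P}(|U_m| \leq \epsilon) \geq 1 - 1/(144 \epsilon^2)$ for any $\epsilon > 0$. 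Meanwhile, the Stirling bounds \eqref{eq:stirling} yield that the central atom of $B_n$ (at $0$ for $n$ even, at $\pm x$ for $n$ odd) has mass at least $c_1/\sqrt{n}$ with $c_1$ close to $\sqrt{2/\pi}$.

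With $\epsilon > 0$ a small constant to be chosen, I would consider the two intervals $I_0 = [-\epsilon, \epsilon]$ and $I_1 = [x - \epsilon, x + \epsilon]$ (in $S_N$-scale). Regardless of the parity of $N - M$, at least one central atom of $B_{N-M}$ lies in $I_0 \cup I_1$ (the atom at $0$ for $N-M$ even, or the atom at $+x$ for $N-M$ odd). Combining the Bernoulli atom bound with the concentration of $U_m$ gives
\[
\mathbb{P}(S_N \in I_0 \cup I_1) \geq \frac{c_1(1 - 1/(144 \epsilon^2))}{\sqrt{N}} \cdot \mathbb{P}(M \leq 2hwN) \geq \frac{c_1(1 - 1/(144 \epsilon^2))}{2 \sqrt{N}},
\]
while the Gaussian mass satisfies $\mathbb{P}(G \in (I_0 \cup I_1)/\sqrt{N}) \leq 4\epsilon/\sqrt{2\pi N}$ since $G$ has density bounded by $1/\sqrt{2\pi}$. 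For $\epsilon$ sufficiently small (on the order of $0.1$), the excess $\mathbb{P}(S_N \in I_0 \cup I_1) - \mathbb{P}(G \in (I_0 \cup I_1)/\sqrt{N})$ is strictly positive and of order $1/\sqrt{N}$; pigeonholing the excess across $I_0$ and $I_1$ and applying the atom argument then yields $d_{\mathrm{KS}}(S_N/\sqrt{N}, G) \geq 1/(50 \sqrt{N})$.

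The main obstacle is tracking the constants tightly enough to obtain the specific lower bound $1/(50 \sqrt{N})$: the losses from Markov's bound on $M$, Chebyshev's bound on $|U_m|$, and Stirling on the central binomial must combine against the Gaussian mass in $I_0 \cup I_1$ to yield a final constant of at least $1/25$ of excess. The Chebyshev estimate is rather loose for $\epsilon$ moderately larger than $\mathrm{std}(U_m)$, and a cleaner argument uses either the explicit Irwin--Hall density at $0$ (of order $1/(w\sqrt{m})$) or the Gaussian approximation to $U_m$ for larger $m$. It may also help to separate the small-$hwN$ regime (where $\mathbb{P}(M = 0)$ is close to $1$ and the Bernoulli atom argument applies more directly) from the large-$hwN$ regime (where $M$ concentrates and one argues via the density spike).
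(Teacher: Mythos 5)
Your conceptual framework is the same as the paper's: decompose $\mu_{h,w}$ as a Bernoulli--uniform mixture, show that the central binomial atom of $B_{N-m}$ (of mass $\asymp 1/\sqrt{N}$) is not smeared below the Gaussian level by $U_m$ when $hw^3 N$ is small, and convert the excess mass on a short interval into a KS lower bound via the atom argument. However, as written the proposal cannot reach the constant $1/50$. The culprit is the Markov step $\mathbb{P}(M \le 2hwN) \ge 1/2$, which immediately halves the usable atom mass. Tracking your own bound, the total excess over $I_0 \cup I_1$ is
\begin{align*}
E(\epsilon) = \frac{c_1}{2}\left(1 - \frac{1}{144\epsilon^2}\right) - \frac{4\epsilon}{\sqrt{2\pi}},
\end{align*}
and even with the most optimistic $c_1 = \sqrt{2/\pi}$, this is maximized around $\epsilon \approx 0.15$ at $E \approx 0.037$; after halving once for the pigeonhole across $I_0, I_1$ and once more for the atom argument, the resulting KS bound is about $0.009/\sqrt{N}$, short of $0.02/\sqrt{N} = 1/(50\sqrt{N})$. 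With the rigorous Stirling constant $c_1 = (10/11)^2\sqrt{2/\pi} \approx 0.66$ the excess $E$ is in fact negative for all $\epsilon$, so the argument breaks entirely. You correctly flag this as the obstacle, but it is not a matter of tightening constants: the Markov/Chebyshev combination as set up is structurally too lossy.

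The paper avoids the Markov step altogether. It applies Chebyshev \emph{per} mixture count $j$, giving $\mathbb{P}(|Y_1 + \cdots + Y_j| < 1/4) \ge 1 - 2jw^2$, then sums over $j$ against the binomial weights, so that the loss is controlled by $\mathbb{E}[M]w^2 = hw^3 N$ directly (with no separate probability-$1/2$ hit for the tail event $\{M \le 2hwN\}$). It also avoids your pigeonhole over $I_0, I_1$: rather than letting the parity of $N - M$ be random, it fixes the target interval at $\mathrm{1}_{N \text{ odd}}/\sqrt{N} \pm 1/(4\sqrt{N})$ according to the deterministic parity of $N$ and restricts the sum to even $j$, paying only $\mathbb{P}(J \text{ even}) \ge 1/2$. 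This way there is exactly one factor-$1/2$ parity loss, a Chebyshev loss of $1 - 2hw^3 N \ge 5/6$, and the Gaussian mass is charged over a single interval of width $1/(2\sqrt{N})$ rather than two intervals of total width $4\epsilon/\sqrt{N}$; the excess comes out as $\frac{1}{8\sqrt{2\pi N}}(1 - 12hw^3 N) \ge \frac{1}{16\sqrt{2\pi N}} > \frac{1}{50\sqrt{N}}$. Your proposal can likely be repaired by replacing the Markov conditioning with the per-$m$ Chebyshev sum $\sum_m \mathbb{P}(M=m)(1 - mw^2/(12\epsilon^2)) = 1 - hw^3 N/(12\epsilon^2)$ and fixing $\epsilon = 1/4$, at which point it essentially becomes the paper's argument.
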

\begin{proof}

 The law $\mu_{h,w}$ is a mixture of two different basic probability laws. Indeed, setting $\varepsilon = hw$, we note that 
\begin{align*}
\mu_{h,w}(\mathrm{d}s) =  (1 - \varepsilon) \nu_x(\mathrm{d}s) + \varepsilon \xi_w(\mathrm{d}s),
\end{align*}
where $\nu_x$ and $\xi_w$ are probability laws, $\nu_x$ being the law of a Bernoulli random variable taking values $x$ and $-x$ each with probability $1/2$, and $\xi_w$ being the law of a random variable uniformly distributed on $[-w/2,w/2]$. 

Let $X_1,\ldots,X_N$ be independent random variables identically distributed according to $\mu_{h,w}$. Define
\begin{align*}
F_N(s) := \mathbb{P} \left( \frac{X_1 + \ldots + X_N}{\sqrt{N}} \leq s \right).
\end{align*}
We will study the rate at which $F_N(s)$ increases over small intervals of width of the order $1/\sqrt{N}$, showing that it increases over these intervals significantly faster than the standard Gaussian distribution function $\Phi(s)$. 

To this end, since each $X_i$ has the law of a mixture of two different probability laws, i.e.\ a Bernoulli and a uniform, it follows that the sum $N^{-1/2}(X_1+\ldots+X_N)$ can be realised as a binomial mixture of Bernoullis and uniforms. Namely,
\begin{align*}
F_N(s) = \sum_{j=0}^N \binom{N}{j}(1-\varepsilon)^{N-j}\varepsilon^j G_{N-j,j}(s),
\end{align*}
where
\begin{align*}
G_{N-j,j}(s) := \mathbb{P}\left( \frac{ Z_1 + \ldots + Z_{N-j} + Y_1 + \ldots + Y_j}{ \sqrt{N} } \leq s \right), 
\end{align*}
given $(Z_i)$ and $(Y_i)$ independent, and with $Z_i$ distributed like $\nu_x$ and $Y_i$ distributed like $\xi_w$. 

Consider that since the Gaussian density is $1/\sqrt{2\pi}$ at zero, up to lower order terms the associated Gaussian distribution function $\Phi(s)$ increases by $1/2\sqrt{2\pi N}$ over the interval $[-\frac{1}{4\sqrt{N}},\frac{1}{4\sqrt{N}}]$. We now show that provided $hw^3N$ is not too large, $F_N(s)$ has as a larger increase (albeit still over the order $1/\sqrt{N}$) over the same interval. In fact, for reasons that will become clear, we have to adjust the interval slightly according to whether $N$ is odd or even, so that instead we consider $\left[ \frac{\mathrm{1}_{N \text{ odd}} -1/4}{\sqrt{N}}, \frac{\mathrm{1}_{N \text{ odd}} +1/4}{\sqrt{N}}\right]$, where $\mathrm{1}_{N \text{ odd}}$ denotes the indicator function that $N$ is odd.

To study the increase over this range, note we can write
\begin{align} \label{eq:upi}
&F_N\left(\frac{\mathrm{1}_{N \text{ odd}} +1/4}{\sqrt{N}}\right) - F_N\left(\frac{\mathrm{1}_{N \text{ odd}} -1/4}{\sqrt{N}}\right) \nonumber \\
&=  \sum_{j=0}^N \binom{N}{j}(1-\varepsilon)^{N-j}\varepsilon^j \left( G_{N-j,j}\left(\frac{\mathrm{1}_{N \text{ odd}} +1/4}{\sqrt{N}}\right) - G_{N-j,j}\left(\frac{\mathrm{1}_{N \text{ odd}} -1/4}{\sqrt{N}}\right) \right).
\end{align}
Now note that 
\begin{align} \label{eq:even0}
 &G_{N-j,j}\left(\frac{\mathrm{1}_{N \text{ odd}} +1/4}{\sqrt{N}}\right) - G_{N-j,j}\left(\frac{\mathrm{1}_{N \text{ odd}} -1/4}{\sqrt{N}}\right)\nonumber\\ &= \mathbb{P} \left( |Z_1 + \ldots + Z_{N-j} + Y_1 + \ldots + Y_ j - \mathrm{1}_{N \text{ odd}} | < 1/4 \right ) \nonumber \\
& \geq \mathbb{P}( Z_1 + \ldots + Z_{N-j} = \mathrm{1}_{N \text{ odd}} ) \mathbb{P} (|Y_1 + \ldots +Y_j| < 1/4 ) ,
\end{align}
though of course the event $\{Z_1 + \ldots + Z_{N-j} = \mathrm{1}_{N \text{ odd}} \}$ only has positive probability when $N-j$ has the same parity as $N$, i.e. when $j$ is even. In this case, we now find lower bounds for each of the latter probabilities.

First we consider the term involving $Z_i$. If $k$ is even, then recalling the Stirling inequalities \eqref{eq:stirling} we have 
\begin{align*}
\mathbb{P}( Z_1 + \ldots + Z_k = 0 ) = \frac{k!}{(k/2)!^2} 2^{-k} \geq \left( \frac{10}{11} \right)^2 \frac{ \sqrt{2 \pi k } (k/e)^k}{ 2 \pi k/2 (k/2e)^k} 2^{ - k} \geq \frac{3}{2} \frac{1}{\sqrt{2 \pi k}}.
\end{align*} 
Similarly, if $k$ is odd, we have
\begin{align*}
\mathbb{P}( Z_1 + \ldots + Z_k = 1 ) = \frac{k!}{\frac{k+1}{2}!\frac{k-1}{2}!} 2^{-k} \geq \left( \frac{10}{11} \right)^2 \frac{ \sqrt{2 \pi k } (k/e)^k}{ 2 \pi \sqrt{k+1}\sqrt{k-1} (k/2e)^k} 2^{ - k} \geq \frac{3}{2} \frac{1}{\sqrt{2 \pi k}}.
\end{align*} 
Thus, whenever $j$ is even we have 
\begin{align} \label{eq:even1}
\mathbb{P}( Z_1 + \ldots + Z_{N-j} = \mathrm{1}_{N \text{ odd}} ) \geq \frac{3}{2} \frac{1}{\sqrt{2 \pi N}}.
\end{align}
On the other hand, a calculation tells us that $Y_1$ has variance $w^2/12$. Taking a rough bound using Chebyshev's inequality we have 
\begin{align} \label{eq:even2}	
\mathbb{P} ( |Y_1 + \ldots + Y_ j | < 1/4 ) \geq 1 - 2 j w^2.
\end{align}
Combining \eqref{eq:even1} and \eqref{eq:even2} in \eqref{eq:even0}, we see that whenever $j$ is even we have
\begin{align} \label{eq:even4}
 &G_{N-j,j}\left(\frac{\mathrm{1}_{N \text{ odd}} +1/4}{\sqrt{N}}\right) - G_{N-j,j}\left(\frac{\mathrm{1}_{N \text{ odd}} -1/4}{\sqrt{N}}\right) \geq (1-2jw^2)\frac{3}{2} \frac{1}{\sqrt{2 \pi N}}.
\end{align}
Using \eqref{eq:even4} in \eqref{eq:upi}, we have 
\begin{align} \label{eq:upi2}
F_N\left(\frac{\mathrm{1}_{N \text{ odd}} +1/4}{\sqrt{N}}\right) - F_N\left(\frac{\mathrm{1}_{N \text{ odd}} -1/4}{\sqrt{N}}\right) \geq \frac{3}{2} \frac{1}{\sqrt{2\pi N}} \sum_{j=0, j \text{ even}}^N \binom{N}{j}(1-\varepsilon)^{N-j }\varepsilon^j  ( 1 - 2w^2j) .
\end{align}
Letting $J$ be a Binomial random variable with parameters $N$ and $\varepsilon$, using the simple bound $\mathbb{E}[J\mathrm{1}_{J \text{ even}}] \leq \mathbb{E}[J]$, from \eqref{eq:upi2} we have 
\begin{align} \label{eq:upi3}
F_N\left(\frac{\mathrm{1}_{N \text{ odd}} +1/4}{\sqrt{N}}\right) - F_N\left(\frac{\mathrm{1}_{N \text{ odd}} -1/4}{\sqrt{N}}\right) \geq \frac{3}{2} \frac{1}{\sqrt{2\pi N}} ( \mathbb{P}( J \text{ is even} ) - 2w^2 \mathbb{E}[J] ).
\end{align}
The probability that $J$ is even is $\frac{1}{2} + \frac{1}{2}(1 - 2 \varepsilon)^N$, which is greater than $1/2$ by our assumption $\varepsilon = hw \leq 1/2$. Also, $\mathbb{E}[J] = N\varepsilon$. Thus from \eqref{eq:upi3} we have 
\begin{align} \label{eq:upi4}
F_N\left(\frac{\mathrm{1}_{N \text{ odd}} +1/4}{\sqrt{N}}\right) - F_N\left(\frac{\mathrm{1}_{N \text{ odd}} -1/4}{\sqrt{N}}\right) \geq \frac{3}{2} \frac{1}{\sqrt{2\pi N}} ( \frac{1}{2} - 2w^2 N \varepsilon).
\end{align}
Conversely, note that
\begin{align} \label{eq:upi5}
\Phi\left(\frac{\mathrm{1}_{N \text{ odd}} +1/4}{\sqrt{N}}\right) - \Phi\left(\frac{\mathrm{1}_{N \text{ odd}} -1/4}{\sqrt{N}}\right) \leq \frac{1}{2 \sqrt{2 \pi N }}.
\end{align}

If $F,G:\mathbb{R} \to [0,1]$ are functions with $F(b) - F(a) = x$ and $G(b) - G(a) = y$, then
\begin{align*}
(G(b)-F(b))-(G(a)-F(a)) = y-x.
\end{align*}
In particular, either $|G(b)-F(b)|$ or $|G(a)-F(a)|$ exceeds $\frac{1}{2}|y-x|$. 

Whenever $1-12w^2N\varepsilon \geq 0$, the lower bound in \eqref{eq:upi4} exceeds the upper bound in \eqref{eq:upi5}. In particular, under this condition, by  \eqref{eq:upi4} and \eqref{eq:upi5} there exists $s \in \mathbb{R}$ such that 
\begin{align*}
|F_N(s) - \Phi(s) | \geq \frac{1}{2} \left( \frac{3}{2} \frac{1}{\sqrt{2\pi N}} ( \frac{1}{2} - 2w^2 N \varepsilon) - \frac{1}{2 \sqrt{2 \pi N }} \right) = \frac{1}{8\sqrt{2 \pi N}} (1 - 12hw^3N),
\end{align*}
where in the final equality above, we have used the fact that $\varepsilon = hw$. In particular, whenever $hw^3N \leq 1/24$, we have
\begin{align*}
\sup_{s \in \mathbb{R}}|F_N(s) - \Phi(s) | \geq \frac{1}{16\sqrt{2 \pi N}}  \geq \frac{1}{50\sqrt{N}}, 
\end{align*}
completing the proof.
\end{proof}

We are now ready to prove Theorem \ref{thm:reverse}.
\begin{proof}[Proof of Theorem \ref{thm:reverse}]
Let $C,c,\rho,\rho'$ be as in the statement of the theorem. Set 
\begin{align} \label{eq:scaa}
h = w = \frac{1}{3}N^{-1/4}.
\end{align}
Note $hw^3N = \frac{1}{81} \leq \frac{1}{24}$, so that whenever $X_1,\ldots,X_N$ are i.i.d.\ with law $\mu_{h,w}$, by Lemma \ref{lem:counter} we have 
\begin{align} \label{eq:olv1}
\mathrm{d}_{\mathrm{KS}}\left( \frac{X_1 + \ldots + X_N}{ \sqrt{N} }, G \right) \geq \frac{1}{50}\frac{1}{\sqrt{N}}.
\end{align}
We now study the quantity of the right-hand-side of \eqref{eq:reverse} as it pertains to $\mu_{h,w}$ with $h,w$ as in \eqref{eq:scaa}, setting $k=3$ as our degree of moment matching. By \eqref{eq:4up} we have
\begin{align*}
\mathbb{E}[|X|^4] \leq (1 - \frac{1}{9}N^{-1/2})^2 \leq 2,
\end{align*}
for all $N \geq 1$. Thus with $C,c',\rho,\rho'$ prescribed, and $k=3$, using $\mathbb{E}[|X|^4] \leq 2$ to obtain the first inequality below, and \eqref{eq:scaa} to obtain the second, the right-hand-side of \eqref{eq:reverse} satisfies
\begin{align} \label{eq:olv2}
 C \left\{ \frac{\mathbb{E}[|X|^{k+1}]}{N^{\frac{k-1}{2}}}  +  e^{ - c h^{1-\rho}w^{3-\rho'} N/\mathbb{E}[|X|^{k+1}]} \right\}&\leq C \left\{ \frac{2}{N} + e^{ - \frac{c}{2} h^{1 - \rho} w^{3 - \rho'} N } \right\}\nonumber \\
&\leq C \left\{ \frac{2}{N} + e^{ - \frac{c}{2}3^{\rho+\rho'-4}N^{(\rho+\rho')/4} } \right\} .
\end{align}
Now, if at least one of $\rho$ or $\rho'$ is positive, for all sufficiently large $N$ we have
\begin{align} \label{eq:olv3}
\frac{1}{50}\frac{1}{\sqrt{N}} > C \left\{ \frac{2}{N} + e^{ - \frac{c}{2} 3^{\rho+\rho'-4}N^{(\rho+\rho')/4} } \right\} .
\end{align}
Combining \eqref{eq:olv3} with \eqref{eq:olv1} and \eqref{eq:olv2}, for all sufficiently large $N$ we have 
\begin{align*}
\mathrm{d}_{\mathrm{KS}}\left( \frac{X_1 + \ldots + X_N}{ \sqrt{N} }, G \right) > C \left\{ \frac{\mathbb{E}[|X|^{k+1}]}{N^{\frac{k-1}{2}}}  +  e^{ - c h^{1-\rho}w^{3-\rho'} N/\mathbb{E}[|X|^{k+1}]} \right\},
\end{align*}
whenever $X_1,\ldots,X_N$ are i.i.d.\ with law $\mu_{h,w}$ (with $h,w$ as in \eqref{eq:scaa}, $k=3$, and $\mathbb{E}[|X|^4]$ being the fourth moment associated with $\mu_{h,w}$). That completes the proof of Theorem \ref{thm:reverse}.
\end{proof}

\end{document}